\newtheorem{theorem}{Theorem}
\newtheorem{assumption}{Assumption}
\newtheorem{remark}{Remark}
\def\norm#1{\|#1\|}
\def\myfbox#1{\fbox{\begin{minipage}{16cm}#1\end{minipage}}}
\providecommand{\keywords}[1]
{
  \textbf{Keywords:} #1
}
\providecommand{\AMSCode}[1]
{
  \textbf{AMS subject classification:} #1
}
\begin{document}

\title{\bf A Dimension Reduction Technique for Large-scale Structured Sparse Optimization Problems with Application to Convex Clustering
}
\author{ Yancheng Yuan\footnotemark[2], \quad
Tsung-Hui Chang\footnotemark[3], \quad
Defeng Sun\footnotemark[4], \quad Kim-Chuan Toh\footnotemark[5]}
\date{\today}
\maketitle

\renewcommand{\thefootnote}{\fnsymbol{footnote}}
\footnotetext[2]{Department of Applied Mathematics, The Hong Kong Polytechnic University, Hung Hom, Hong Kong ({\tt yancheng.yuan@polyu.edu.hk}).}
\footnotetext[3]{School of Science and Engineering, The Chinese University of Hong Kong (Shenzhen) and Shenzhen Research Institute of Big Data, China \texttt{changtsunghui@cuhk.edu.cn}. The research of this author is in part supported by the Shenzhen Research Institute of Big Data, under Grant 2019ORF01002.}
\footnotetext[4]{Department of Applied Mathematics, The Hong Kong Polytechnic University, Hung Hom, Hong Kong ({\tt defeng.sun@polyu.edu.hk}).  The research of this author is supported in part by Hong Kong Research Grant Council under grant number 15303720 and the Shenzhen Research Institute of Big Data, China under Grant 2019ORF01002.}
\footnotetext[5]{Department of Mathematics and Institute of Operations Research and Analytics, National University of Singapore, 10 Lower Kent Ridge Road, Singapore ({\tt mattohkc@nus.edu.sg}).The research of this author is supported by the Ministry of Education, Singapore, under its Academic Research Fund Tier 3 grant call (MOE-2019-T3-1-010).}
\renewcommand{\thefootnote}{\arabic{footnote}}

\begin{abstract}
 In this paper, we propose a novel adaptive sieving (AS) technique and an enhanced AS (EAS) technique, which are solver independent and could accelerate optimization algorithms for solving large scale convex optimization problems with intrinsic structured sparsity. We establish the finite convergence property of the AS technique and the EAS technique with inexact solutions of the reduced subproblems. As an important application, we apply the AS technique and the EAS technique on the convex clustering model, which could accelerate the state-of-the-art algorithm {\sc Ssnal} by more than 7 times and the algorithm ADMM by more than 14 times.
\end{abstract}
\keywords{Adaptive sieving, structured sparsity, dimension reduction, convex optimization, convex clustering.}\\[3pt]
\AMSCode{90C06, 90C25, 90C90}
\section{Introduction and Related Work}
Clustering is one of the most important and fundamental problems in data science, which plays important roles in numerous applications. Significant advances have been achieved in clustering during the last few decades, including K-means \cite{lloyd1982least,vassilvitskii2006k}, spectral clustering \cite{ng2002spectral,shi2000normalized}, subspace clustering \cite{soltanolkotabi2014robust,vidal2011subspace} and so on. Despite these developments, some known drawbacks of these centroid based models, such as sensitivity to the initialization, limited effectiveness in high dimensional problems, and more importantly, the requirement on prior knowledge of the number of clusters, are still challenging to overcome. Here, we want to emphasize that the requirement on prior knowledge of the number of clusters is impractical for most real applications and to estimate the number of clusters itself is as important as clustering. One could argue that we can run classical clustering algorithms, such as K-means, with a few guesses on the number of clusters, but these clustering results are usually independent. Thus, users still need to determine the final clustering results subjectively based on their own preference.

Recently, the convex clustering approach has been proposed \cite{hocking2011clusterpath,lindsten2011clustering,pelckmans2005convex} and becomes more and more popular due to its good empirical performance and nice theoretical guarantees \cite{chi2019recovering,jiang2020recovery,panahi2017clustering,sun2021convex,tan2015statistical,zhu2014convex}. More recently, some nonconvex variants of the convex clustering model have also been proposed \cite{lin2020centroid,shah2017robust}. Specifically, for a given collection of $n$ data points which are put as columns of a matrix $A \in \mathbb{R}^{d \times N}$, the convex clustering model is to solve the following optimization problem
\begin{equation}
\label{eq: convex-clustering}
\min_{X \in \mathbb{R}^{d \times N}} \quad \frac{1}{2}\sum_{i=1}^N \|X_{:i} - A_{:i}\|^2 + \lambda\sum_{1 \leq i < j \leq N}w_{ij}\|X_{:i} - X_{:j}\|_p,
\end{equation}
where $X_{:i}$ (or $A_{:i}$) is the $i$-th column of $X$ (or A),  $w_{ij} = w_{ji} \geq 0$ are given weights and  $\lambda \geq 0$ is the hyper-parameter to control the strength of the diffusion penalty. Here $\|\cdot\|_p$ is the vector $p$-norm and we require $p \geq 1$ to guarantee the convexity of the model. After solving the model \eqref{eq: convex-clustering} and obtaining the solution $X^*$, we assign the data points $A_{:i}$ and $A_{:j}$ to the same cluster if $X_{:i}^* = X_{:j}^*$. Readers who are interested in more details about cluster identification based on the convex clustering model with an inexact solution could refer to \cite{chi2015splitting,jiang2020sum,sun2021convex}. It has been proved in \cite{chi2015splitting} that the convex clustering model \eqref{eq: convex-clustering} can generate a continuous clustering path with respect to the hyper-parameter $\lambda$. Thus, prior knowledge on the number of clusters is \textbf{not} required, which is a highly desirable property.

Although the convex clustering model \eqref{eq: convex-clustering} is strongly convex, it is still quite challenging to solve since the number of terms in the diffusion penalty grows with $n$ and could be extremely large (up to $O(n^2)$). A number of numerical optimization algorithms has been proposed for solving the convex clustering model. Among them, the alternating direction method of multipliers (ADMM) and the alternating minimization algorithm (AMA), which are proposed in \cite{chi2015splitting}, are very popular. Recently, a second-order based algorithm called {\sc Ssnal} has been proposed \cite{yuan2018efficient, sun2021convex}, which can efficiently solve \eqref{eq: convex-clustering} to achieve high accuracy for large $n$ but
moderate $d$, by taking advantage of the so-called second-order sparsity. However, the scalability could still be limited for those algorithms due to their need to handle all data
points simultaneously.
On the other hand, one may try to use some stochastic algorithms to solve \eqref{eq: convex-clustering} as in \cite{panahi2017clustering}, however, the empirical performance is not so attractive since we need a rather accurate solution in order to determine the cluster memberships correctly based on the obtained solution $X^*$. Naturally one would ask whether we could design a deterministic algorithm which can scale as well as those stochastic algorithms.
While this goal seems unattainable at the first glance, here we will give a positive answer. Now, we briefly explain the key idea on why this is possible. As demonstrated later in this paper, the same idea works not only for convex clustering model, but also for other optimization problems with special structures.

The key idea behind could actually be explained in a single sentence, that is, although the number of data points $N$ could be extremely large, the number of clusters must be small, which is the purpose of clustering. In other words, for well-chosen values of $\lambda$, most columns of the optimal solution $X^*$ for model \eqref{eq: convex-clustering} should be identical. In this case, most of the terms in the diffusion penalty should be zero. If we can remove those zero terms in advance and reduce the dimension of $X$ simultaneously, we only need to solve a small scale optimization problem, even for extremely large $N$. In this paper, we will propose an adaptive sieving (AS) technique and an enhanced adaptive sieving (EAS) technique, which are rigorous implementations of the aforementioned idea with theoretical guarantees. The details could be found later in this paper. We would like to emphasize that, the dimension reduction techniques proposed in this paper are solver independent, thus they could be applied to various algorithms which can solve the reduced optimization problems.

Motivated by the convex clustering problem, in this paper, we consider the optimization problems of the following form:
\begin{equation}\label{eq: two-block-structured}
\min_{x \in \mathbb{R}^n} \quad  F_{\lambda}(x) := f(x) + \lambda p(Bx),
\end{equation}
where $\lambda > 0$ is a hyper-parameter, $f : \mathbb{R}^n \to \mathbb{R}$ is a twice continuously differentiable convex function, $p : \mathbb{R}^m \to (-\infty, +\infty]$ is a closed proper convex function and $B: \mathbb{R}^n \to \mathbb{R}^m$ is a linear map. In some real applications, $p$ is usually a regularizer which can enforce sparsity on $Bx$ and $B$ is a linear map which encodes desirable structures of $x$. This indicates the meaning of structured sparsity. With special designed matrices $B$, the optimization problem \eqref{eq: two-block-structured} includes many important models, such as the convex clustering model \eqref{eq: convex-clustering} \footnote{We can take $x = vec(X) \in \mathbb{R}^{dN}$, where $vec(X)$ is the vectorization of the matrix $X$ by stacking its columns.}, fused lasso model \cite{tibshirani2005sparsity}, clustered lasso model \cite{clusteredlasso}, and so on.

In this paper, we will propose solver independent techniques which can solve the optimization problem \eqref{eq: two-block-structured} via solving a sequence of subproblems with much smaller problem size. The main idea of this paper is inspired by a recent preprint \cite{lin2020adaptive}. Authors in \cite{lin2020adaptive} introduce an adaptive sieving technique to reduce the dimension of the optimization problem with sparse solutions (taking the linear map $B$ to be identity $I$ in \eqref{eq: two-block-structured}). However, the same idea cannot be directly applied to \eqref{eq: two-block-structured} for the cases where $B \neq I$. First, the optimal solutions of \eqref{eq: two-block-structured} may not be sparse at all, only with some special structures, such as being block-wise constant. Thus, the adaptive sieving technique in \cite{lin2020adaptive} could not apply on $x$.  Second, one may try to apply the AS directly on $Bx$ by introducing a new variable $y = Bx$. Although this idea may work for reducing the dimension of $y$ (or $Bx$), this direct application cannot reduce the dimension of $x$ simultaneously. If we cannot reduce the dimension of $x$ simultaneously, we still need to solve large scale subproblems. Third, one of the keys for applying the AS technique is to check the optimality condition of \eqref{eq: two-block-structured} for a given $\bar{x} \in \mathbb{R}^n$. However, as one may see later, this is highly non-trivial if the inverse of $B$ is not available (which is the case for most of problems with structured sparsity). In this paper, we will propose a new adaptive sieving technique and an enhanced adaptive sieving technique to address all of these issues.

To demonstrate the effectiveness of the proposed idea, we evaluate the empirical performance of our proposed AS and EAS technique with state-of-the-art algorithms: {\sc Ssnal} \cite{yuan2018efficient}, ADMM \cite{chi2015splitting} and AMA \cite{chi2015splitting}, for solving the convex clustering model \eqref{eq: convex-clustering}. As the readers will see later in the numerical experiments section, the numerical results on both simulated and real data sets demonstrate that the proposed AS and EAS could substantially reduce the dimension of the optimization problems. As a result, the AS/EAS techniques can accelerate the state-of-the-art algorithm {\sc Ssnal} by more than 7 times and the algorithm ADMM by more than 14 times for solving the convex clustering model.

The main contributions of our paper can be summarized as follows:

\begin{itemize}
    \item We propose a new solver independent adaptive sieving (AS) technique which can be applied to solve large scale optimization problems \eqref{eq: two-block-structured} with structured sparsity by solving a sequence of subproblems with much smaller size.
    \item We show the details of how to reduce the dimension of $x$ and $Bx$ simultaneously. Also, we show how to construct the corresponding reduced subproblem of \eqref{eq: two-block-structured} based on the structured sparsity of $x$ (i.e., the sparsity of $Bx$).
    \item Our proposed AS technique allows the reduced subproblems to be solved inexactly and we prove the finite convergence property of the proposed AS technique for solving \eqref{eq: two-block-structured}.
    \item As one will see later, although the AS technique will converge in finite iterations for solving \eqref{eq: two-block-structured}, the sieving procedure of the AS technique may continue even if we obtain an optimal solution $x^*$ of \eqref{eq: two-block-structured}. To address this issue, we propose an enhanced adaptive sieving (EAS) technique, which can certify the optimality of an obtained solution with low additional computational cost. This can potentially reduce the sieving iterations of the AS technique and further accelerate the algorithms. The finite convergence property of the EAS technique is also proved.
    \item Both the AS technique and the EAS technique are extended to obtain a solution path of the structured sparse optimization problem \eqref{eq: two-block-structured} for a sequence of hyper-parameters $+\infty > \lambda_1 > \lambda_2 > \cdots > \lambda_k > 0$.
    \item As an important application, extensive numerical experiments on the convex clustering model for both simulated and real data sets are provided. The superior numerical experiment results demonstrate the power of the AS technique and the EAS technique for accelerating numerical optimization algorithms to generate the solution path for the convex clustering model \eqref{eq: convex-clustering}.
\end{itemize}

The rest of this paper is organized as follows: In Section 2, we introduce the adaptive sieving technique and the enhanced adaptive sieving technique  for optimization problems with structured sparsity. The application of the AS technique and the EAS technique on the convex clustering model will be shown in Section 3 and numerical results are summarized in Section 4. We conclude the paper in Section 5.
\medskip

\textbf{Notation}. We use blackboard bold capital letters to denote finite dimensional real Euclidean spaces, e.g. $\mathbb{X}$, $\mathbb{Y}$. In particular, we use $\mathbb{R}^{m \times n}$ ($\mathbb{R}$) to denote the set of all real $m \times n$ matrices (real numbers). We denote column vectors by lowercase letters, e.g. $v \in \mathbb{R}^n$, and matrices by capital letters, e.g. $A \in \mathbb{R}^{m \times n}$. We denote the transpose of the matrix $A$ as $A^T$;
the $i$-th ($ij$-th) element of a vector $v$ (matrix $A$) by $v_i$ ($A_{ij}$). For a given integer $n \geq 1$, we denote the collection of integers from $1$ to $n$ by $[n]$. We denote the complement of an index set $I \subseteq [m]$ as $I^c$. For given index sets $I \subseteq [m]$ and $J \subseteq [n]$, we denote the submatrix consisting with rows (columns) indexed by $I$ ($J$) as $A_{I:}$ ($A_{:J}$). We denote the range space and null space of $A$ by ${\rm Range}(A)$ and ${\rm Null}(A)$, respectively. For a vector $x \in \mathbb{R}^n$ and a scalar $p > 0$, we define the vector $p$-norm as: $\|x\|_p := (\sum_{i=1}^n |x_i|^p)^{1/p}$. We use $\|\cdot\|$ to denote the vector $2$-norm. For a closed proper convex function $f: \mathbb{R}^n \to (-\infty, + \infty]$, the conjugate of $f$ is $f^*(z) := \sup_{x \in \mathbb{R}^n} \{\langle x, z \rangle - f(x)\}$. For a closed convex set $C \subseteq \mathbb{R}^n$ and a given vector $a \in \mathbb{R}^n$, the projection of $a$ onto the set $C$ is $\Pi_{C}(a) := \arg\min_{x \in C} \frac{1}{2}\|x - a\|^2$.

\section{An Adaptive Sieving Technique for Structured Sparsity}
\label{sec: AS-General}
In this section, we will introduce a novel adaptive sieving technique for obtaining the solution path for the structured sparse convex programming problem \eqref{eq: two-block-structured}.
 Equivalently, we can reformulate \eqref{eq: two-block-structured} as follows:
\begin{equation}\label{eq: two-block-structured-reform}
\tag{$P_{\lambda}$}
\begin{array}{ll}
\min_{x \in \mathbb{R}^n, y \in \mathbb{R}^m} & f(x) + \lambda p(y) \\
\rm{s.t.} & Bx - y = 0.
\end{array}
\end{equation}
The Lagrangian function corresponding to \eqref{eq: two-block-structured-reform} is defined as:
\begin{equation}\label{eq: LF-two-block-structured-reform}
l(x, y; z) := f(x) + \lambda p(y) + \langle z, Bx - y \rangle,
\end{equation}
where $z \in \mathbb{R}^m$ is the Lagrange multiplier. The corresponding dual problem is given by
\begin{equation}
\tag{$D_{\lambda}$}
\label{eq: dual-problem}
\max_{z \in \mathbb{R}^m} \quad D_{\lambda}(z) := -f^*(-B^Tz) - \lambda p^*(z/\lambda).
\end{equation}
Here, $f^*$ and $p^*$ are the conjugate of $f$ and $p$, respectively. Denote the solution set to \eqref{eq: two-block-structured-reform} as $\Omega_{\lambda}$. The Karush-Kuhn-Tucker (KKT) conditions imply that $(x^*, y^*) \in \Omega_{\lambda}$ if and only if there exists $ {z}^* \in \mathbb{R}^m$ such that
\begin{equation}\label{eq: KKT-two-block-structured-reform}\tag{KKT}
\left\{
\begin{array}{l}
\nabla f(x^*) + B^T z^* = 0,\\
z^* \in \lambda \partial p(y^*),\\
Bx^* - y^* = 0.
\end{array}
\right.
\end{equation}
For any given triplet $(x, y, z) \in \mathbb{R}^n \times \mathbb{R}^m \times \mathbb{R}^m$, we define  the KKT residual function for problem \eqref{eq: two-block-structured-reform} as:
\begin{equation}\label{eq: KKT-residual}
R_{\lambda}(x, y, z) := \left(
\begin{array}{c}
\nabla f(x) + B^Tz\\
y - {\rm Prox}_{\lambda p}(y + z)\\
Bx - y
\end{array}
\right).
\end{equation}
We know that $(x^*, y^*) \in \Omega_{\lambda}$ if and only if there exists $z^* \in \mathbb{R}^m$ such that
\[
R_{\lambda}(x^*, y^*, z^*) = 0.
\]
In this paper, we make the following two mild assumptions.
\begin{assumption}
\label{assumption: nonempty-optimal}
For any given $\lambda > 0$, the optimal solution set $\Omega_{\lambda}$ to the optimization problem \eqref{eq: two-block-structured-reform} is non-empty and compact.
\end{assumption}

\begin{assumption}
\label{assumption: uniqueness}
For any given $\lambda > 0$ and $y \in \mathbb{R}^m$, we define
\[
I^c:= \{
i \in [m] \mid y_i \not= 0
\},
\]
if $I^c \neq \emptyset$, then $(\partial(\lambda p(y)))_{I^c}$ is a singleton.
\end{assumption}
\begin{remark}
Here, we make some remarks on  Assumption \ref{assumption: uniqueness}. For most of the common used regularizers, such as lasso \cite{tibshirani1996regression}, group lasso \cite{yuan2006model}, exclusive lasso \cite{zhou2010exclusive}, the Assumption \ref{assumption: uniqueness} is satisfied. Let us take the lasso regularizer as an example. If $p(y) = \|y\|_1$, then,
$$
(\partial(\lambda p(y)))_i = \left\{
\begin{array}{ll}
\lambda{\rm sign}(y_i) & \mbox{if $y_i \neq 0$},\\
\mbox{$[-\lambda, \lambda]$}  & \mbox{if $y_i = 0$}.
\end{array}
\right.
$$
Thus, we know that for any $y \neq 0$,
\[
(\partial(\lambda p(y)))_{I^c} = (\lambda{\rm sign}(y))_{I^c}
\]
is a singleton. Here ${\rm sign}(\cdot)$ is the signum function.
\end{remark}
 When the matrix $B$ is the identity mapping and $p$ is a regularizer that can induce sparsity, it has been demonstrated in \cite{lin2020adaptive} that we can substantially reduce the dimension of the problem
\eqref{eq: two-block-structured-reform} by applying the adaptive sieving technique. However, it is not clear whether a similar idea could benefit those models whose solutions are not sparse but have some special structures. In this paper, we will give a positive answer to this question in the following sections.

The theme of this paper is to design a technique which can reduce the dimension of a class of optimization problem \eqref{eq: two-block-structured-reform} with structured sparsity by exploring the intrinsic structure of the problem in an explicit way. Readers will see shortly that the key idea behind is quite simple but a rigorous realization of this simple idea is highly non-trivial.

We first introduce our principal idea in a general way, then, we will propose a technique called adaptive sieving (AS) to rigorously implement the idea. We fix the parameter $\lambda$ in \eqref{eq: two-block-structured-reform} for now. For a given index set $I \subseteq [m]$, if there is some prior knowledge for us to believe that $y_I = 0$, which we call the structured sparsity, then it is natural for us to consider the following constrained optimization problem generated by the index set $I$:
\begin{equation}\label{eq:AS-reduced}
\tag{$P_{\lambda}(I)$}
\begin{array}{ll}
\min_{x \in \mathbb{R}^n, y \in \mathbb{R}^m} & f(x) + \lambda p(y)\\
& Bx - y = 0,\\
& y_I = 0.
\end{array}
\end{equation}
 We denote this problem as \eqref{eq:AS-reduced} to indicate its dependence on the index set $I$, we will denote the index set $[m] \backslash I$, which is the complement of $I$, as $I^c$.

Our principal idea is to obtain a solution to the original optimization problem \eqref{eq: two-block-structured-reform} by solving a sequence of subproblems with lower dimension, which are induced by \eqref{eq:AS-reduced}. The key for a successful realization of this principal idea for solving  \eqref{eq: two-block-structured-reform} depends on answering the following questions:
\begin{itemize}
    \item[Q1:] For a given index set $I \in [m]$, how to effectively reduce the dimension of \eqref{eq: two-block-structured-reform} based on \eqref{eq:AS-reduced}?
    \item[Q2:] If we can solve \eqref{eq:AS-reduced} to obtain a solution pair $(\bar{x}, \bar{y})$, which is \textbf{not} yet a solution to \eqref{eq: two-block-structured-reform}, can we guarantee that we can update the index set $I$ to construct a new reduced problem in the form of \eqref{eq:AS-reduced}?
    \item[Q3:] If the solution pair $(\bar{x}, \bar{y})$, which is obtained by solving \eqref{eq:AS-reduced} , is indeed a solution to \eqref{eq: two-block-structured-reform}, can we certify its optimality and stop the whole procedure?
    \item[Q4:] Is the proposed technique robust to the inexactness of the obtained solution pair? In other words, if we can only obtain an inexact solution of \eqref{eq: AS-small} (defined in Section 2.1) under a given tolerance $\epsilon > 0$, can we obtain a solution of \eqref{eq: two-block-structured-reform} under the tolerance $O(\epsilon)$?
    \item[Q5:] Is it possible for the proposed technique to be solver independent? In other words, the technique could be applied to any algorithms that can solve \eqref{eq: AS-small} inexactly under a given tolerance.
\end{itemize}
\begin{remark}
We make some remarks before we describe the proposed AS technique.
\begin{itemize}
    \item[1.] Although designing an efficient and
    convergent algorithm for solving \eqref{eq:AS-reduced} is also an important task, it is not the main purpose of this paper. Actually, as one may see later, our proposed AS technique is solver independent. There are also existing algorithms which can solve \eqref{eq: AS-small} to a moderate accuracy \cite{chi2015splitting,sun2021convex,yuan2018efficient}.
    \item[2.] Although it seems unnecessary to raise the question Q3 at the first glance, it is actually essential for applying any dimension reduction technique to solve \eqref{eq: two-block-structured-reform} based on \eqref{eq:AS-reduced}. In order to check the optimality of the solution pair $(\bar{x}, \bar{y})$,  we need to construct the corresponding dual solution and check the corresponding KKT condition \eqref{eq: KKT-two-block-structured-reform}. This is highly non-trivial since the dual solutions are not unique if structured sparsity exists. This is also one of the main difficulties for applying the AS technique to problem \eqref{eq: two-block-structured-reform} with structured sparsity as compared to \cite{lin2020adaptive}.
    \item[3.] The robustness mentioned in Q4 is also very important since the best we can expect in general is to obtain an inexact solution to \eqref{eq: AS-small}.
\end{itemize}
\end{remark}

Now, we start to give the details of our realization of the principal idea.
\subsection{A Dimension Reduction Technique for \eqref{eq: two-block-structured-reform} Based on \eqref{eq:AS-reduced}}

We first show how we can reduce the dimension of the variables $x$ and $y$ simultaneously for the problem \eqref{eq: two-block-structured-reform} based on the constrained optimization problem \eqref{eq:AS-reduced}, which is one of the core ideas of this paper. These details also answer the question Q1.

Assume that the rank of $B_{I:}$ is $r > 0$.  Then there exists three index sets $\alpha$, $\beta$ and $\gamma$ with $|\gamma| = r$, that forms a partition of $[n]$, such that $B_{I\beta} = 0$ and $B_{I\gamma}$ has full column rank. Here, we also assume that the index set $\alpha$ is nonempty; otherwise, we must have
\[
x_{\gamma} = 0.
\]
Since $B_{I\gamma}$ has full column rank,
there is a unique $|\gamma| \times |\alpha|$ matrix $M_{\gamma\alpha}$,\footnote{Here, we abuse the notation a little bit to indicate the dependence of $M_{\gamma\alpha}$ on the index sets $\alpha$ and $\gamma$. The uniqueness is in the sense of a given partition.} such that
\begin{equation}\label{eq: Mga}
B_{I\alpha} + B_{I\gamma}M_{\gamma\alpha} = 0.
\end{equation}
Then, we can eliminate $x_{\gamma}$ by the constraints of \eqref{eq:AS-reduced} as
\[
x_{\gamma} = M_{\gamma\alpha}x_{\alpha}.
\]
Define:
$$\varphi(x_{\alpha}, x_{\beta}) = f(\dot{x}), \quad q(y_{I^c}) = p(\dot{y}),$$
where
\[
\dot{x}_{\alpha} = x_{\alpha}, \quad \dot{x}_{\beta} = x_{\beta}, \quad \dot{x}_{\gamma} = M_{\gamma\alpha}x_{\alpha},
\]
and
\[
\dot{y}_i = \left\{
\begin{array}{ll}
y_i, & \mbox{if $i \in I^c$},\\
0 & \mbox{if $i \in I$}.
\end{array}
\right.
\]
It is not difficult to realize that we can solve problem \eqref{eq:AS-reduced} via solving the following reduced optimization problem:
\begin{equation}\label{eq: AS-small}
\tag{$RP_{\lambda}(I)$}
\begin{array}{ll}
\min_{x_{\alpha} \in \mathbb{R}^{|\alpha|}, x_{\beta} \in \mathbb{R}^{|\beta|}, y_{I^c} \in \mathbb{R}^{|I^c|}} & \varphi(x_{\alpha}, x_{\beta}) + \lambda q(y_{I^c})\\
{\rm s.t.} & (B_{I^c\alpha} + B_{I^c\gamma}M_{\gamma\alpha}) x_{\alpha} + B_{I^c\beta}x_{\beta} - y_{I^c} = 0.
\end{array}
\end{equation}
 The Lagrange function corresponding to \eqref{eq: AS-small} is given by
\[
l(x_{\alpha}, x_{\beta}, y_{I^c}, \xi) = \varphi(x_{\alpha}, x_{\beta}) + \lambda q(y_{I^c}) + \langle \xi, (B_{I^c\alpha} + B_{I^c\gamma}M_{\gamma\alpha}) x_{\alpha} + B_{I^c\beta}x_{\beta} - y_{I^c} \rangle,
\]
where $\xi \in \mathbb{R}^{|I^c|}$ is the Lagrange multiplier.

Now, if we solve \eqref{eq: AS-small} and obtain a solution $(\hat{x}_{\alpha}, \hat{x}_{\beta}, \hat{y}_{I^c})$, then, there exists a $\hat{\xi}$ that satisfies the following KKT condition:
\begin{equation}
\label{eq: KKT-RPI}
\left\{
\begin{array}{l}
(\nabla f(\hat{x}))_{\alpha} + M_{\gamma\alpha}^T(\nabla f(\hat{x}))_{\gamma} + (B_{I^c\alpha} + B_{I^c\gamma}M_{\gamma\alpha})^T\hat{\xi} = 0,\\[3pt]
(\nabla f(\hat{x}))_{\beta} + B_{I^c\beta}^T\hat{\xi} = 0,\quad \hat{\xi} \in (\partial (\lambda p(\hat{y})))_{I^c},\\[3pt]
(B_{I^c\alpha} + B_{I^c\gamma}M_{\gamma\alpha}) \hat{x}_{\alpha} + B_{I^c\beta}\hat{x}_{\beta}  - \hat{y}_{I^c} = 0,
\end{array}
\right.
\end{equation}
where $\hat{x}$ and $\hat{y}$ are defined as
\[
\hat{x}_{\alpha} = \hat{x}_{\alpha}, \quad \hat{x}_{\beta} = \hat{x}_{\beta}, \quad \hat{x}_{\gamma} = M_{\gamma\alpha}\hat{x}_{\alpha} \]
and
\[
\hat{y}_{I^c} = \hat{y}_{I^c}, \quad \hat{y}_I = 0,
\]
respectively.
Then $(\bar{x}, \bar{y})$, which is constructed by
\begin{equation}
\label{eq: recover_primal}
\left\{
\begin{array}{l}
\bar{x}_{\alpha} = \hat{x}_{\alpha},\quad \bar{x}_{\beta} = \hat{x}_{\beta},\quad \bar{x}_{\gamma} = M_{\gamma\alpha}\hat{x}_{\alpha}, \\
\bar{y}_{I^c} = \hat{y}_{I^c}, \quad \bar{y}_I = 0
\end{array}
\right.
\end{equation}
is a solution to problem \eqref{eq:AS-reduced}. Thus, in order to obtain a solution to \eqref{eq:AS-reduced}, we only need to solve a corresponding reduced problem \eqref{eq: AS-small} whose dimension
can be much smaller.
\begin{remark}
  We make some remarks to close this subsection.
  \begin{itemize}
      \item[1.] We reduce the dimension of the problem from $\mathbb{R}^n \times \mathbb{R}^m$ to $\mathbb{R}^{n - |\gamma|} \times \mathbb{R}^{m - |I|}$, which can be a substantial reduction. For example, if the solution of \eqref{eq: two-block-structured-reform} is indeed sparse (this is an intrinsic property since we can obtain a sparse solution in general for large $\lambda$), then $|I|$ is close to $m$ and $|\gamma|$ is close to $n$ simultaneously.
      \item[2.] In many real applications (e.g. convex clustering), we can identify the index set $\alpha$, $\beta$, $\gamma$ and construct the matrix $M_{\gamma\alpha}$ at a low cost. Also, since the linear map $B$ is designed to encode some structures of the solution, it is usually very sparse.
  \end{itemize}
\end{remark}

\subsection{An Adaptive Sieving Technique for \eqref{eq: two-block-structured-reform} with a Fixed $\lambda > 0$}
Now, we move on to present the details of the AS technique. We fix the parameter $\lambda$ for now and we will generalize it to handle the case for a sequence of $\lambda > 0$ later. Also, for
simplicity, we first present the idea with the assumption that we can solve \eqref{eq: AS-small} exactly. The same idea will be generalized to the inexact setting without much difficulties later.

We first show how we can update the index set $I$ if the current obtained solution $(\bar{x}, \bar{y})$ via solving \eqref{eq:AS-reduced} is not an optimal solution to \eqref{eq: two-block-structured-reform}.
The key idea is to construct a corresponding dual variable pair $(\bar{u}, \bar{w}) \in \mathbb{R}^m \times \mathbb{R}^{|I|}$ which satisfies the following KKT condition for \eqref{eq:AS-reduced}:
\begin{equation}\label{eq: KKT-AS-reduced}
\left\{
\begin{array}{l}
(\nabla f(\bar{x}))_{\alpha} + B_{I\alpha}^T\bar{u}_I + B_{I^c\alpha}^T\bar{u}_{I^c} = 0, \\
(\nabla f(\bar{x}))_{\beta} + B_{I^c\beta}^T\bar{u}_{I^c} = 0,\quad (\nabla f(\bar{x}))_{\gamma} + B_{I\gamma}^T\bar{u}_I + B_{I^c\gamma}^T\bar{u}_{I^c} = 0,\\[3pt]
\bar{u}_I -\bar{w} \in \lambda(\partial p(\bar{y}))_I, \quad \bar{u}_{I^c} \in \lambda(\partial p(\bar{y}))_{I^c},\\[3pt]
B\bar{x} - \bar{y} = 0, \quad \bar{y}_I = 0.
\end{array}
\right.
\end{equation}

Since $(\bar{x}, \bar{y}) = (\hat{x}, \hat{y})$ and $(\hat{x}, \hat{y}, \hat{\xi})$ is a solution to \eqref{eq: KKT-RPI}, we must have
\[
B^T_{I^c\beta}\hat{\xi} = B^T_{I^c\beta}\bar{u}_{I^c}, \quad \hat{\xi} \in (\partial(\lambda p(\hat{y})))_{I^c}, \quad \bar{u}_{I^c} \in (\partial(\lambda p(\bar{y})))_{I^c}.
\]
Aggressively, we construct $\bar{u}_{I^c}$ as
\begin{equation}
\label{eq: u_Ic_def}
\bar{u}_{I^c} = \hat{\xi}.
\end{equation}
By the above construction of $\bar{u}_{I^c}$ and the equation \eqref{eq: Mga}, the first equation of \eqref{eq: KKT-AS-reduced} is implied by the third equation of \eqref{eq: KKT-AS-reduced} and the first equation of \eqref{eq: KKT-RPI}. Thus, we can construct the pair $(\bar{u}_I, \bar{w})$ via solving the following equations for $(u_I, w)$:
\begin{equation}
\label{eq: construct_u_w}
\left\{
\begin{array}{l}
(\nabla f(\bar{x}))_{\gamma} + B^T_{I\gamma}u_I + B^T_{I^c\gamma}\bar{u}_{I^c} = 0,\\
u_I - w \in (\partial(\lambda p(\bar{y})))_{I}.
\end{array}
\right.
\end{equation}
Since $w$ is an unconstrained variable,  for any $\hat{u}_I$ satisfying the first equation of \eqref{eq: construct_u_w}, there exists a $\hat{w}$ such that the second one is satisfied. However, realizing the fact that if there exists a $\tilde{u}_I$ such that $(\tilde{u}_I, 0)$ is a solution to \eqref{eq: construct_u_w}, then the current solution pair $(\bar{x}, \bar{y})$ is an optimal solution to \eqref{eq: two-block-structured-reform}. Thus, we propose to construct the pair $(\bar{u}_I, \bar{w})$ such that $\bar{w}$ has the minimum Euclidean norm. Since $B_{I\gamma}$ has full column rank, we can construct a particular solution to the first equation of \eqref{eq: construct_u_w} as
\begin{equation}
\label{eq: uI0}
(\bar{u}_I)_0 = -B_{I\gamma}(B^T_{I\gamma}B_{I\gamma})^{-1}((\nabla f(\bar{x}))_{\gamma} + B^T_{I^c\gamma}\bar{u}_{I^c}).
\end{equation}
Thus, all the solution to the first equation of \eqref{eq: construct_u_w} is given by
\[
u_I = (\bar{u}_I)_0 + d,
\]
where $d \in {\rm Null}(B^T_{I\gamma})$. In summary, we
construct the solution pair $(\bar{u}_I, \bar{w})$ as follows:
\begin{equation}
\label{eq: recover_uw}
\bar{u}_I = (\bar{u}_I)_0 + \bar{d}, \quad \bar{w} = \bar{u}_I - \Pi_{(\partial(\lambda p(\bar{y})))_I}(\bar{u}_I),
\end{equation}
where $\bar{d}$ is a solution to the following auxiliary optimization problem:
\begin{equation}
\label{eq: project_dbar}
\begin{array}{ll}
\min_{d \in \mathbb{R}^{|I|}} & \frac{1}{2}\|((\bar{u}_I)_0 + d) - \Pi_{(\partial(\lambda p(\bar{y})))_I}((\bar{u}_I)_0 + d)\|^2 \\
{\rm s.t.} & d \in {\rm Null}(B^T_{I\gamma}).
\end{array}
\end{equation}
Up to this point, we have completed the construction of a dual solution pair $(\bar{u}, \bar{w})$. We show the nice properties of the constructed $(\bar{u}, \bar{w})$ in Theorem \ref{Theorem: answers to Q2} and Theorem \ref{thm: finite-convergence}.
\begin{theorem}
\label{Theorem: answers to Q2}
Assume that $(\hat{x}_{\alpha}, \hat{x}_{\beta}, \hat{y}_{I^c})$ is an optimal  solution to the following optimization problem:
\begin{equation}
\label{eq: inexactRPI}
\begin{array}{ll}
\min_{x_{\alpha} \in \mathbb{R}^{|\alpha|}, x_{\beta} \in \mathbb{R}^{|\beta|}, y_{I^c} \in \mathbb{R}^{|I^c|}} & \varphi(x_{\alpha}, x_{\beta}) + \lambda q(y_{I^c}) + \langle x_\alpha, \hat{\delta}_{1} \rangle + \langle x_{\beta}, \hat{\delta}_2 \rangle - \langle y_{I^c}, \hat{\delta}_3 \rangle \\
{\rm s.t.} & (B_{I^c\alpha} + B_{I^c\gamma}M_{\gamma\alpha}) x_{\alpha} + B_{I^c\beta}x_{\beta} - y_{I^c} = 0,
\end{array}
\end{equation}
and $\hat{\xi}$ is the corresponding Lagrange multiplier. Here, $(\hat{\delta}_1, \hat{\delta}_2, \hat{\delta}_3) \in \mathbb{R}^{|\alpha|} \times \mathbb{R}^{|\beta|} \times \mathbb{R}^{|I^c|}$ are given error terms satisfying $\|\hat{\delta}_1\| + \|\hat{\delta}_2\| + \|\hat{\delta}_3\| \leq \epsilon$. Let $(\bar{x}, \bar{y}, \bar{u}_{I^c}, \bar{u}_{I}, \bar{w})$ be the solution that is constructed from \eqref{eq: recover_primal}, \eqref{eq: u_Ic_def}, and \eqref{eq: recover_uw}. Define  $J(\lambda)$ as follows:
\begin{equation}
\label{eq: Jlambda_def}
J(\lambda) := \{j \in I \mid \bar{u}_j \not\in (\partial(\lambda p(\bar{y})))_j\}.
\end{equation}
Then, $J(\lambda) \not= \emptyset$ if
\[
\|R_{\lambda}(\bar{x}, \bar{y}, \bar{u})\| > \epsilon.
\]
\end{theorem}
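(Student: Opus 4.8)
The plan is to establish the contrapositive: assuming $J(\lambda)=\emptyset$, I will show $\|R_{\lambda}(\bar{x},\bar{y},\bar{u})\|\le \epsilon$. Since $\bar{x}=\hat{x}$ and $\bar{y}=\hat{y}$, it suffices to bound separately the three blocks of the residual in \eqref{eq: KKT-residual}, namely the stationarity part $\nabla f(\bar{x})+B^{T}\bar{u}$, the proximal part $\bar{y}-{\rm Prox}_{\lambda p}(\bar{y}+\bar{u})$, and the feasibility part $B\bar{x}-\bar{y}$, and then combine them with the error budget $\|\hat{\delta}_1\|+\|\hat{\delta}_2\|+\|\hat{\delta}_3\|\le\epsilon$.

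The feasibility block vanishes identically: on $I$ the relations $\bar{x}_{\gamma}=M_{\gamma\alpha}\bar{x}_{\alpha}$, $\bar{y}_I=0$, $B_{I\beta}=0$ and \eqref{eq: Mga} give $(B\bar{x})_I=(B_{I\alpha}+B_{I\gamma}M_{\gamma\alpha})\bar{x}_{\alpha}=0=\bar{y}_I$, and on $I^{c}$ the primal-feasibility equation of the (perturbed) KKT system of \eqref{eq: inexactRPI} together with $\bar{x}_{\alpha}=\hat{x}_{\alpha}$, $\bar{x}_{\beta}=\hat{x}_{\beta}$, $\bar{y}_{I^{c}}=\hat{y}_{I^{c}}$ gives $(B\bar{x})_{I^{c}}=\bar{y}_{I^{c}}$. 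For the stationarity block I split into the $\alpha,\beta,\gamma$ pieces. The $\gamma$-piece is exactly zero because $(\bar{u}_I)_0$ in \eqref{eq: uI0} solves the first equation of \eqref{eq: construct_u_w} exactly and $\bar{u}_I=(\bar{u}_I)_0+\bar{d}$ with $\bar{d}\in{\rm Null}(B_{I\gamma}^{T})$, so $B_{I\gamma}^{T}\bar{u}_I=B_{I\gamma}^{T}(\bar{u}_I)_0$. The $\beta$-piece reduces, via $B_{I\beta}=0$ and $\bar{u}_{I^{c}}=\hat{\xi}$, to the $x_{\beta}$-stationarity of \eqref{eq: inexactRPI}, which equals $-\hat{\delta}_2$. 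The $\alpha$-piece: starting from the $x_{\alpha}$-stationarity of \eqref{eq: inexactRPI}, substituting $\bar{u}_{I^{c}}=\hat{\xi}$, using the $\gamma$-piece identity $(\nabla f(\bar{x}))_{\gamma}+B_{I^{c}\gamma}^{T}\bar{u}_{I^{c}}=-B_{I\gamma}^{T}\bar{u}_I$ just obtained, and using the transposed form $M_{\gamma\alpha}^{T}B_{I\gamma}^{T}=-B_{I\alpha}^{T}$ of \eqref{eq: Mga}, the $M_{\gamma\alpha}$-terms cancel and one is left with $-\hat{\delta}_1$. Hence the stationarity block has norm $\sqrt{\|\hat{\delta}_1\|^{2}+\|\hat{\delta}_2\|^{2}}$.

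The core of the argument is the proximal block, where $J(\lambda)=\emptyset$ enters. The $y_{I^{c}}$-stationarity of \eqref{eq: inexactRPI} yields $\bar{u}_{I^{c}}+\hat{\delta}_3=\hat{\xi}+\hat{\delta}_3\in(\partial(\lambda p(\bar{y})))_{I^{c}}$, so $\bar{u}_{I^{c}}$ is already a partial subgradient up to $\hat{\delta}_3$. On $I$, $J(\lambda)=\emptyset$ says each coordinate of $\bar{u}_I$ lies in the corresponding coordinate of $(\partial(\lambda p(\bar{y})))_I$; using the (block-)separable structure of $p$ on the zero block $\bar{y}_I=0$, this upgrades to $\bar{u}_I\in(\partial(\lambda p(\bar{y})))_I$, hence $\bar{w}=\bar{u}_I-\Pi_{(\partial(\lambda p(\bar{y})))_I}(\bar{u}_I)=0$, and one can assemble a single $g\in\partial(\lambda p(\bar{y}))$ with $g_I=\bar{u}_I$ and $g_{I^{c}}=\bar{u}_{I^{c}}+\hat{\delta}_3$, so $\|g-\bar{u}\|=\|\hat{\delta}_3\|$. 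Since $g\in\partial(\lambda p(\bar{y}))$ is equivalent to $\bar{y}={\rm Prox}_{\lambda p}(\bar{y}+g)$, nonexpansiveness of ${\rm Prox}_{\lambda p}$ gives $\|\bar{y}-{\rm Prox}_{\lambda p}(\bar{y}+\bar{u})\|=\|{\rm Prox}_{\lambda p}(\bar{y}+g)-{\rm Prox}_{\lambda p}(\bar{y}+\bar{u})\|\le\|\hat{\delta}_3\|$. Combining the three blocks, $\|R_{\lambda}(\bar{x},\bar{y},\bar{u})\|^{2}\le\|\hat{\delta}_1\|^{2}+\|\hat{\delta}_2\|^{2}+\|\hat{\delta}_3\|^{2}\le(\|\hat{\delta}_1\|+\|\hat{\delta}_2\|+\|\hat{\delta}_3\|)^{2}\le\epsilon^{2}$, which is exactly the contrapositive of the claim.

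The main obstacle I anticipate is the step lifting the coordinatewise membership $\bar{u}_j\in(\partial(\lambda p(\bar{y})))_j$ for all $j\in I$ to the vector statement $\bar{u}_I\in(\partial(\lambda p(\bar{y})))_I$ (equivalently $\bar{w}=0$), and the accompanying assembly of a genuine subgradient $g$ from its $I$- and $I^{c}$-pieces; this is where the structural hypotheses on $p$ (Assumption \ref{assumption: uniqueness} together with the separability carried by $B$ and $p$ in the motivating models) and the precise definition of $(\bar{u}_I,\bar{w})$ through the auxiliary projection problem \eqref{eq: project_dbar} must be invoked carefully. Everything else is bookkeeping with the reduction identity \eqref{eq: Mga}, the constructions \eqref{eq: recover_primal}, \eqref{eq: u_Ic_def}, \eqref{eq: recover_uw}, and the perturbed KKT system of \eqref{eq: inexactRPI}.
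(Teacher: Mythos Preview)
Your proposal is correct and follows essentially the same route as the paper: prove the contrapositive, verify from the perturbed KKT system \eqref{eq: KKT-inexactRPI} and the constructions \eqref{eq: recover_primal}, \eqref{eq: u_Ic_def}, \eqref{eq: recover_uw} that the stationarity block equals $(-\hat{\delta}_1,-\hat{\delta}_2,0)$ and the feasibility block vanishes, then use $J(\lambda)=\emptyset$ together with the $I^{c}$-stationarity to assemble $\bar{u}+\hat{\delta}\in\partial(\lambda p(\bar{y}))$ (with $\hat{\delta}_I=0$, $\hat{\delta}_{I^{c}}=\hat{\delta}_3$) and invoke nonexpansiveness of ${\rm Prox}_{\lambda p}$ to bound the proximal block by $\|\hat{\delta}_3\|$. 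Your write-up is in fact more explicit than the paper's in checking the $\alpha,\beta,\gamma$ pieces, and you rightly flag the separability step---the passage from coordinatewise membership $\bar{u}_j\in(\partial(\lambda p(\bar{y})))_j$ to $\bar{u}+\hat{\delta}\in\partial(\lambda p(\bar{y}))$---as the one place requiring structural assumptions on $p$; the paper uses this step without comment.
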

\begin{proof}
Since $(\hat{x}_{\alpha}, \hat{x}_{\beta}, \hat{y}_{I^c})$ is an optimal  solution to \eqref{eq: inexactRPI} and $\hat{\xi}$ is the corresponding Lagrange multiplier,   the following KKT system holds:
\begin{equation}
\label{eq: KKT-inexactRPI}
\left\{
\begin{array}{l}
(\nabla f(\hat{x}))_{\alpha} + M_{\gamma\alpha}^T(\nabla f(\hat{x}))_{\gamma} + (B_{I^c\alpha} + B_{I^c\gamma}M_{\gamma\alpha})^T\hat{\xi} + \hat{\delta}_1 = 0,\\[3pt]
(\nabla f(\hat{x}))_{\beta} + B_{I^c\beta}^T\hat{\xi} + \hat{\delta}_2 = 0,\\[3pt]
\hat{\xi} + \hat{\delta}_3 \in (\partial (\lambda p(\hat{y})))_{I^c},\\[3pt]
(B_{I^c\alpha} + B_{I^c\gamma}M_{\gamma\alpha}) \hat{x}_{\alpha} + B_{I^c\beta}\hat{x}_{\beta}  - \hat{y}_{I^c} = 0.
\end{array}
\right.
\end{equation}
By construction, $(\bar{x}, \bar{y}, \bar{u}_{I^c}, \bar{u}_{I}, \bar{w})$ is a solution to:
\begin{equation}\label{eq: KKT-inexactPI}
\left\{
\begin{array}{l}
(\nabla f(\bar{x}))_{\alpha} + B_{I\alpha}^T\bar{u}_I + B_{I^c\alpha}^T\bar{u}_{I^c} + \hat{\delta}_1 = 0, \\[3pt]
(\nabla f(\bar{x}))_{\beta} + B_{I^c\beta}^T\bar{u}_{I^c} + \hat{\delta}_2 = 0,\\[3pt]
(\nabla f(\bar{x}))_{\gamma} + B_{I\gamma}^T\bar{u}_I + B_{I^c\gamma}^T\bar{u}_{I^c} = 0,\\[3pt]
\bar{u}_I -\bar{w} \in \lambda(\partial p(\bar{y}))_I,\\[3pt]
\bar{u}_{I^c} + \hat{\delta}_3 \in \lambda(\partial p(\bar{y}))_{I^c},\\[3pt]
B\bar{x} - \bar{y} = 0, \quad \bar{y}_I = 0.
\end{array}
\right.
\end{equation}
Now, we prove that $J(\lambda) \not= \emptyset$ provided $\|R_{\lambda}(\bar{x}, \bar{y}, \bar{u})\| > \epsilon$. We prove it by contradiction. Assume that
\[
J(\lambda) = \emptyset.
\]
Then we have
\[
\bar{u} + \hat{\delta} \in \partial (\lambda p(\bar{y})),
\]
where $\hat{\delta} = (\hat{\delta}_I, \hat{\delta}_{I^c}) = (0, \hat{\delta}_3)$. This implies that
\[
\bar{y} - {\rm Prox}_{\lambda p}(\bar{y} + (\bar{u} + \hat{\delta})) = 0.
\]
Then,
\begin{equation}
\label{eq: Residual-control}
\begin{array}{lll}
\|R_{\lambda}(\bar{x}, \bar{y}, \bar{u})\| & = & \|(\nabla f(\bar{x}) + B^T\bar{u}, \bar{y} - {\rm Prox}_{\lambda p}(\bar{y} + \bar{u}), B\bar{x} - \bar{y})\|\\
&=& \|((-\hat{\delta}_1, -\hat{\delta}_2, 0), {\rm Prox}_{\lambda p}(\bar{y} + (\bar{u} + \hat{\delta})) - {\rm Prox}_{\lambda p}(\bar{y} + \bar{u}), 0)\| \\
& \leq & \|\hat{\delta}_1\| + \|\hat{\delta}_2\| + \|\hat{\delta}_3\|\\
& \leq & \epsilon.
\end{array}
\end{equation}
Here, we used the property that the proximal mapping is Lipchitiz continuous with modulus $1$. This is a contradiction. Thus $J(\lambda) \not = \emptyset$ and we proved the statement in the theorem.
\end{proof}
\begin{remark}
 We do not need to specify a priori error terms $\hat{\delta}_1,\hat{\delta}_2,\hat{\delta}_3$
 in Theorem \ref{Theorem: answers to Q2}.
 They should be interpreted as the errors incurred when  we solve the problem \eqref{eq: AS-small} inexactly with a given tolerance.
\end{remark}
 An important implication of Theorem \ref{Theorem: answers to Q2} is that, if the current obtained solution pair $(\bar{x}, \bar{y})$ is not an inexact optimal solution to \eqref{eq: two-block-structured-reform} under the given tolerance, we can update the index set $I$ by removing the identified violated index set $J(\lambda)$. This important implication motivates us to propose the adaptive sieving (AS) technique for \eqref{eq: two-block-structured-reform} with a given fixed $\lambda > 0$, which is presented in Algorithm \ref{alg:screening}.

\begin{algorithm}[!ht]\small
	\caption{Adaptive sieving for solving \eqref{eq: two-block-structured-reform} with a fixed $\lambda > 0$}
	\label{alg:screening}
	\begin{algorithmic}[1]
		\STATE \textbf{Input}: a given hyper-parameter $\lambda > 0$ and a given tolerance $\epsilon > 0$.
		\STATE \textbf{Output}: $(x^*(\lambda), y^*(\lambda), z^*(\lambda))$.
		\STATE \textbf{Initialization}: Generate an initial index set by a predefined initialization strategy: $I^0(\lambda) \subseteq [m]$.
		\FOR{$i = 0, 1, 2, \dots$}
		\STATE \textbf{1}.For the given index set $I^i(\lambda)$, construct the index partition $\{\alpha^i, \beta^i, \gamma^i\}$ and the corresponding $M_{\gamma^i\alpha^i}$.
		\STATE \textbf{2}. Apply any well designed algorithm to solve problem \eqref{eq: AS-small} with $\{I^{i}(\lambda), \alpha^i, \beta^i, \gamma^i, M_{\gamma^i\alpha^i}\}$ and obtain an inexact solution $( \hat{x}^i_{\alpha_i}, \hat{x}^i_{\beta_i}, \hat{y}^i_{(I^i)^c(\lambda)}, \hat{\xi}^i)$
        which satisfies the corresponding KKT system \eqref{eq: KKT-inexactRPI} with the latent error terms $(\hat{\delta}_1^i, \hat{\delta}_2^i, \hat{\delta}_3^i)$ such that $\|\hat{\delta}^i_1\| + \|\hat{\delta}^i_2\| + \|\hat{\delta}^i_3\| \leq \epsilon$.
        \STATE \textbf{3}. Recover a solution $(\bar{x}^i, \bar{y}^i, \bar{u}^i, \bar{w}^i)$ by the construction of \eqref{eq: recover_primal}, \eqref{eq: u_Ic_def} and \eqref{eq: recover_uw}, respectively.
        \IF{$\|R_{\lambda}(\bar{x}^i, \bar{y}^i, \bar{u}^i)\| \leq \epsilon$}
        \STATE Set $(x^*(\lambda), y^*(\lambda), z^*(\lambda)) = (\bar{x}^i, \bar{y}^i, \bar{u}^i)$.
        \STATE \textbf{break}.
        \ELSE
        \STATE Create $J^{i}(\lambda)$:
        \begin{equation}\label{eq: AS-checking}
        J^i(\lambda) = \{j \in I^i(\lambda) \mid \bar{u}^i_j \not\in \partial ( \lambda p(\bar{y}^i))_j \},
        \end{equation}
        \IF{$J^i(\lambda) \neq \emptyset$}
        \STATE Update $I^{i+1}(\lambda)$ as:
        \[
        I^{i+1}(\lambda) \leftarrow I^i(\lambda) \backslash J^{i}(\lambda).
        \]
        \ELSE
        \STATE Set $(x^*(\lambda), y^*(\lambda), z^*(\lambda)) = (\bar{x}^i, \bar{y}^i, \bar{u}^i)$.
        \STATE \textbf{break}.
        \ENDIF
        \ENDIF
		\ENDFOR
	\RETURN{$(x^*(\lambda), y^*(\lambda), z^*(\lambda))$.}
	\end{algorithmic}
\end{algorithm}

\begin{theorem}
\label{thm: finite-convergence}
For a given $\epsilon > 0$, with any well designed algorithm which can solve the reduced subproblem \eqref{eq: AS-small} to the given accuracy, Algorithm \ref{alg:screening} is guaranteed to converge in finite number of iterations. Moreover, the obtained pair $(x^*(\lambda), y^*(\lambda), z^*(\lambda))$ is a solution to \eqref{eq: two-block-structured-reform} in the sense that \[\|R_{\lambda}(x^*(\lambda), y^*(\lambda), z^*(\lambda))\| \leq \epsilon.\]
\end{theorem}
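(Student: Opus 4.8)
The plan is to establish two things: first, that Algorithm~\ref{alg:screening} terminates after finitely many iterations, and second, that the triplet it returns at termination is an $\epsilon$-accurate KKT point of \eqref{eq: two-block-structured-reform} in the stated sense. The finiteness argument rests on a strictly monotone quantity: the cardinality $|I^i(\lambda)|$. At every iteration that does not trigger a \textbf{break}, the algorithm sets $I^{i+1}(\lambda) = I^i(\lambda) \setminus J^i(\lambda)$ with $J^i(\lambda) \neq \emptyset$ by the \texttt{else} branch, so $|I^{i+1}(\lambda)| \leq |I^i(\lambda)| - 1$. Since $|I^0(\lambda)| \leq m$ and $|I^i(\lambda)| \geq 0$, only finitely many such strict decreases can occur; hence after at most $m$ iterations the loop must exit through one of the two \textbf{break} statements.

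The second part is to verify that whichever \textbf{break} is hit, the returned $(x^*(\lambda), y^*(\lambda), z^*(\lambda)) = (\bar{x}^i, \bar{y}^i, \bar{u}^i)$ satisfies $\|R_\lambda(x^*(\lambda), y^*(\lambda), z^*(\lambda))\| \leq \epsilon$. There are two cases. If the loop exits at line~8, the condition $\|R_\lambda(\bar x^i, \bar y^i, \bar u^i)\| \leq \epsilon$ is checked explicitly before the \textbf{break}, so there is nothing further to prove. The substantive case is the \texttt{else} break at line~16, which is reached precisely when $J^i(\lambda) = \emptyset$. Here I invoke the contrapositive of Theorem~\ref{Theorem: answers to Q2}: that theorem says $\|R_\lambda(\bar x, \bar y, \bar u)\| > \epsilon$ implies $J(\lambda) \neq \emptyset$, so $J^i(\lambda) = \emptyset$ forces $\|R_\lambda(\bar x^i, \bar y^i, \bar u^i)\| \leq \epsilon$. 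To apply Theorem~\ref{Theorem: answers to Q2} I must check its hypotheses hold at iteration~$i$: the inexact solution $(\hat x^i_{\alpha_i}, \hat x^i_{\beta_i}, \hat y^i_{(I^i)^c})$ together with the multiplier $\hat\xi^i$ satisfies the perturbed KKT system \eqref{eq: KKT-inexactRPI} with error terms bounded by $\epsilon$ — which is exactly what Step~2 of the algorithm guarantees — and the recovered pair $(\bar x^i, \bar y^i, \bar u^i_{I^c}, \bar u^i_I, \bar w^i)$ is built through the prescribed formulas \eqref{eq: recover_primal}, \eqref{eq: u_Ic_def}, \eqref{eq: recover_uw}, which is what Step~3 does. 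Also note $J^i(\lambda)$ in \eqref{eq: AS-checking} is the same set as $J(\lambda)$ in \eqref{eq: Jlambda_def} with $\bar x = \bar x^i$, $\bar y = \bar y^i$, $\bar u = \bar u^i$, so the conclusion transfers verbatim.

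The main obstacle, and the only place where some care is genuinely required, is ensuring that every iterate is \emph{well-defined} so that the loop can actually be executed and Theorem~\ref{Theorem: answers to Q2} is applicable at each step: one needs the index partition $\{\alpha^i, \beta^i, \gamma^i\}$ and the matrix $M_{\gamma^i\alpha^i}$ from Step~1 to exist (which follows from the rank decomposition discussed before \eqref{eq: Mga}, noting $B_{I\gamma}$ has full column rank so $(B^T_{I\gamma}B_{I\gamma})^{-1}$ in \eqref{eq: uI0} exists), and one needs the auxiliary projection problem \eqref{eq: project_dbar} defining $\bar d^i$ to be solvable — its objective is a continuous nonnegative function minimized over the subspace $\mathrm{Null}(B^T_{I\gamma})$, and here Assumptions~\ref{assumption: nonempty-optimal} and~\ref{assumption: uniqueness} are what keep the set $(\partial(\lambda p(\bar y)))_I$ and the associated projection well-behaved. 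Once these structural facts are in place, the proof is the combination of the monotone-cardinality argument with a single invocation of Theorem~\ref{Theorem: answers to Q2}, so I would present it compactly rather than belaboring the routine verifications.
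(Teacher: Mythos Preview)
Your proposal is correct and matches the paper's approach: the paper in fact omits the proof entirely, stating only that it is ``a byproduct of Theorem~\ref{Theorem: answers to Q2}'', and your write-up is precisely that byproduct spelled out --- the monotone-cardinality argument for finite termination together with the contrapositive of Theorem~\ref{Theorem: answers to Q2} for the accuracy guarantee at exit. One small observation worth making explicit: under Theorem~\ref{Theorem: answers to Q2} the inner \texttt{else} branch (the second \textbf{break}) is in fact unreachable, since entering the outer \texttt{else} already gives $\|R_\lambda\|>\epsilon$ and hence $J^i(\lambda)\neq\emptyset$; your treatment of that case is therefore vacuously correct.
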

We omit the proof of Theorem \ref{thm: finite-convergence} here as it is a byproduct of Theorem \ref{Theorem: answers to Q2}
\begin{remark}
\label{remark: after-theorem2}
 We close this subsection by making some remarks here.
 \begin{itemize}
     \item[1.] The proposed AS technique is a practical implementation of the aforementioned principal idea, which is solver independent and answers Q1, Q2, Q4 and Q5 simultaneously.
     \item[2.] However, it may fail to answer the question Q3. The whole procedure described in Algorithm \ref{alg:screening}
     is not guaranteed to certify the optimality of a given solution pair $(\bar{x}, \bar{y})$, even if it is already optimal for ($P_\lambda$). The constructed $\bar{u}$ may not be the correct corresponding Lagrange multiplier. The main reason is because we have aggressively set $\bar{u}_{I^c} = \hat{\xi}$ in \eqref{eq: u_Ic_def}.
     \item[3.] Although Algorithm \ref{alg:screening} may fail to answer the question Q3 and it may need additional iterations to terminate the whole algorithm, the practical performance of Algorithm \ref{alg:screening} is actually quite promising. Readers can find the numerical performance in the numerical experiments section.
     \item[4.] In order to address the possible weakness of the construction of $\bar{u}$ mentioned in item 2, we will propose an enhanced AS technique which can answer all the five questions simultaneously in the next subsection.
 \end{itemize}
\end{remark}

\subsection{An Enhanced Adaptive Sieving Technique}
Now, we introduce an enhanced adaptive sieving technique which can certify the optimality of the obtained pair $(\bar{x}, \bar{y})$ via solving the reduced subproblem \eqref{eq: AS-small} if it is optimal to \eqref{eq: two-block-structured-reform}. With the enhanced AS technique, we can potentially reduce the number of  sieving iterations of Algorithm \ref{alg:screening}.

The key idea is to deal with the issue we mentioned in Remark \ref{remark: after-theorem2}. Now, assume that $(\bar{x}, \bar{y})$ is an optimal solution to \eqref{eq:AS-reduced}, which could be recovered by \eqref{eq: recover_primal} with a solution of \eqref{eq: AS-small}. We can then define a new index set $\tilde{I}$ as follows:
\begin{equation}
\label{eq: tilde_I}
\tilde{I} := \{i \in [m] \mid \bar{y}_i = 0\}.
\end{equation}
By the construction, we have $I \subseteq \tilde{I}$. It is not difficult to see that $(\bar{x}, \bar{y})$ is actually an optimal  solution to the following constrained optimization problem:
\begin{equation}
\tag{$P_{\lambda}(\tilde{I})$}
\label{eq: PtildeI}
\begin{array}{ll}
\min_{x \in \mathbb{R}^n, y \in \mathbb{R}^m} & f(x) + \lambda p(y) \\
{\rm s.t.} & Bx - y = 0 ,\\
& y_{\tilde{I}} = 0.
\end{array}
\end{equation}
In a similar manner, we can define the index sets $\tilde{\alpha}$, $\tilde{\beta}$ and $\tilde{\gamma}$ with $|\tilde{\gamma}| = \tilde{r}$, which form a partition of $[n]$, such that $B_{\tilde{I}\tilde{\beta}} = 0$ and $B_{\tilde{I}\tilde{\gamma}}$ has full column rank. Again, we assume that $\tilde{\alpha} \neq \emptyset$. Thus, there exists a $M_{\tilde{\gamma}\tilde{\alpha}} \in \mathbb{R}^{|\tilde{\gamma}|\times|\tilde{\alpha}|}$ such that
\[
B_{\tilde{I}\tilde{\alpha}} + B_{\tilde{I}\tilde{\gamma}}M_{\tilde{\gamma}\tilde{\alpha}} = 0.
\]
Then, we can eliminate $x_{\tilde{\gamma}}$ by the constraints of \eqref{eq: PtildeI} as
\[
x_{\tilde{\gamma}} = M_{\tilde{\gamma}\tilde{\alpha}}x_{\tilde{\alpha}}.
\]
The Lagrangian function corresponding to \eqref{eq: PtildeI} is given by
\[
l(x, y, v, s) = f(x) + \lambda p(y) + \langle v, Bx - y \rangle + \langle s, y_{\tilde{I}} \rangle,
\]
where $v \in \mathbb{R}^m$ and $s \in \mathbb{R}^{|\tilde{I}|}$ are the Lagrange multipliers. For notational consistency, we denote $(\tilde{x}, \tilde{y}) = (\bar{x}, \bar{y})$. Since $(\tilde{x}, \tilde{y})$ is an optimal solution to \eqref{eq: PtildeI}, there exists $(\tilde{v}, \tilde{s})$ such that the following KKT condition  for \eqref{eq: PtildeI} is satisfied: \begin{equation}\label{eq: KKT-PtildeI}
\left\{
\begin{array}{l}
(\nabla f(\tilde{x}))_{\tilde{\alpha}} + B_{\tilde{I}\tilde{\alpha}}^T\tilde{v}_{\tilde{I}} + B_{\tilde{I}^c\tilde{\alpha}}^T\tilde{v}_{\tilde{I}^c} = 0, \quad (\nabla f(\tilde{x}))_{\tilde{\beta}} + B_{\tilde{I}^c\tilde{\beta}}^T\tilde{v}_{\tilde{I}^c} = 0,\\
(\nabla f(\tilde{x}))_{\tilde{\gamma}} + B_{\tilde{I}\tilde{\gamma}}^T\tilde{v}_{\tilde{I}} + B_{\tilde{I}^c\tilde{\gamma}}^T\tilde{v}_{\tilde{I}^c} = 0,\\[3pt]
\tilde{v}_{\tilde{I}} -\tilde{s} \in \lambda(\partial p(\tilde{y}))_{\tilde{I}}, \quad \tilde{v}_{\tilde{I}^c} \in \lambda(\partial p(\tilde{y}))_{\tilde{I}^c},\\[3pt]
B\tilde{x} - \tilde{y} = 0, \quad \tilde{y}_{\tilde{I}} = 0.
\end{array}
\right.
\end{equation}
On the other hand, we know that $(\tilde{x}_{\tilde{\alpha}}, \tilde{x}_{\tilde{\beta}}, \tilde{y}_{\tilde{I}^c})$ is an optimal solution to the following reduced problem corresponding to \eqref{eq: PtildeI}:
\begin{equation}
\tag{$RP_{\lambda}(\tilde{I})$}
\label{eq: RP_tildeI}
\begin{array}{ll}
\min_{x_{\tilde{\alpha}}\in \mathbb{R}^{|\tilde{\alpha}|}, x_{\tilde{\beta}} \in \mathbb{R}^{|\tilde{\beta}|}, y_{\tilde{I}^c}\in \mathbb{R}^{|\tilde{I}^c|}} & \tilde{\varphi}(x_{\tilde{\alpha}}, x_{\tilde{\beta}}) + \lambda \tilde{q}(y_{\tilde{I}^c}) \\
{\rm s.t.} & (B_{\tilde{I}^c\tilde{\alpha}} + B_{\tilde{I}^c\tilde{\gamma}}M_{\tilde{\gamma}\tilde{\alpha}}) x_{\tilde{\alpha}} + B_{\tilde{I}^c\tilde{\beta}}x_{\tilde{\beta}} - y_{\tilde{I}^c} = 0,
\end{array}
\end{equation}
where
\[
\tilde{\varphi}(x_{\tilde{\alpha}}, x_{\tilde{\beta}}) = f(\dot{x}), \quad
\tilde{q}(y_{\tilde{I}^c}) = p(\dot{y}).
\]
Here
\[
\dot{x}_{\tilde{\alpha}} = x_{\tilde{\alpha}}, \quad \dot{x}_{\tilde{\beta}} = x_{\tilde{\beta}}, \quad \dot{x}_{\tilde{\gamma}} = M_{\tilde{\gamma}\tilde{\alpha}}x_{\tilde{\alpha}},
\]
and
\[
\dot{y}_i = \left\{
\begin{array}{ll}
y_i, & \mbox{if $i \in \tilde{I}^c$},\\
0 & \mbox{if $i \in \tilde{I}$}.
\end{array}
\right.
\]
Since $(\tilde{x}_{\tilde{\alpha}}, \tilde{x}_{\tilde{\beta}}, \tilde{y}_{\tilde{I}^c})$ is an optimal solution to \eqref{eq: RP_tildeI}, there exists a $\tilde{\theta} \in \mathbb{R}^{|\tilde{I}^c|}$ such that the following KKT condition is satisfied:
\begin{equation}\label{eq: KKT-RPtildeI}
\left\{
\begin{array}{l}
(\nabla f(\tilde{x}))_{\tilde{\alpha}} + M_{\tilde{\gamma}\tilde{\alpha}}^T(\nabla f(\tilde{x}))_{\tilde{\gamma}} + (B_{\tilde{I}^c\tilde{\alpha}} + B_{\tilde{I}^c\tilde{\gamma}}M_{\tilde{\gamma}\tilde{\alpha}})^T\tilde{\theta} = 0,\\[3pt]
(\nabla f(\tilde{x}))_{\tilde{\beta}} + B_{\tilde{I}^c\tilde{\beta}}^T\tilde{\theta} = 0,\quad \tilde{\theta} \in \lambda(\partial p(\tilde{y}))_{\tilde{I}^c},\\[3pt]
(B_{\tilde{I}^c\tilde{\alpha}} + B_{\tilde{I}^c\tilde{\gamma}}M_{\tilde{\gamma}\tilde{\alpha}}) \tilde{x}_{\tilde{\alpha}} + B_{\tilde{I}^c\tilde{\beta}}\tilde{x}_{\tilde{\beta}}  - \tilde{y}_{\tilde{I}^c} = 0.
\end{array}
\right.
\end{equation}
Again, the key is to construct a dual pair $(\tilde{v}, \tilde{s})$ from the KKT system \eqref{eq: KKT-RPtildeI}
such that $(\tilde{x}, \tilde{y}, \tilde{v}, \tilde{s})$ is a solution to \eqref{eq: KKT-PtildeI}. Fortunately, by Assumption \ref{assumption: uniqueness} and the fact
$\tilde{I}^c = \{ i\in [m] \mid \tilde{y}_i\not = 0\}$,
we have
\begin{eqnarray}
\tilde{v}_{\tilde{I}^c} = (\partial(\lambda p(\tilde{y})))_{\tilde{I}^c} = \tilde{\theta}.
\label{eq:theta}
\end{eqnarray}
Thus, by the uniqueness of $\tilde{v}_{\tilde{I}^c}$, the second equation of \eqref{eq: KKT-PtildeI} must be satisfied.

Similarly, we construct $(\tilde{v}_{\tilde{I}}, \tilde{s})$ as follows:
\begin{equation}
 \label{eq: recover_v_s}
 \tilde{v}_{\tilde{I}} = (\tilde{v}_{\tilde{I}})_0 + \tilde{d}, \quad  \tilde{s} = \tilde{v}_{\tilde{I}} - \Pi_{(\partial (\lambda p(\bar{y})))_{\tilde{I}}}(\tilde{v}_{\tilde{I}}),
 \end{equation}
 where
 \[
 (\tilde{v}_{\tilde{I}})_0 = -B_{\tilde{I}\tilde{\gamma}}(B_{\tilde{I}\tilde{\gamma}}^TB_{\tilde{I}\tilde{\gamma}})^{-1}\big((\nabla f(\tilde{x}))_{\tilde{\gamma}} + B_{\tilde{I}^c\tilde{\gamma}}^T\tilde{v}_{\tilde{I}^c}\big)
 \]
 and $\tilde{d}$ is an optimal solution to the following auxiliary optimization problem:
\begin{equation}\label{eq: projection_v_s}
\begin{array}{ll}
\min_{d \in \mathbb{R}^{|\tilde{I}|}} & \frac{1}{2}\|((\tilde{v}_{\tilde{I}})_0 + d) - \Pi_{(\partial (\lambda p(\tilde{y})))_{\tilde{I}}}((\tilde{v}_{\tilde{I}})_0 + d)\|^2\\[3pt]
{\rm s.t.} & d \in {\rm Null}(B_{\tilde{I}\tilde{\gamma}}^T).
\end{array}
\end{equation}
For the above constructed $(\tilde{v},\tilde{s})$, it has a
nice property to be summarized in the following theorem. It shows that the constructed dual variable $\tilde{v}$ can certify the optimality of $\tilde{x}$.
\begin{theorem}
\label{thm: early-stop}
For a given $\epsilon > 0$, if the current obtained solution $\tilde{x}$
by solving \eqref{eq: AS-small} is an optimal solution to the following perturbed optimization problem
\begin{equation}
\label{eq: inexact-Plambda}
\begin{array}{ll}
\min_{x \in \mathbb{R}^n} & f(x) + \lambda p(Bx) + \langle x, \tilde{\delta} \rangle,
\end{array}
\end{equation}
where $\tilde{\delta} \in \mathbb{R}^n$ is a latent error vector such that $\|\tilde{\delta}\| \leq \frac{\epsilon}{1 + 2L_{\tilde{\gamma}}}$, with $L_{\tilde{\gamma}} = \|B_{\tilde{I}\tilde{\gamma}}(B^T_{\tilde{I}\tilde{\gamma}}B_{\tilde{I}\tilde{\gamma}})^{-1}\|$. Then, we must have
\[
\|R_{\lambda}(\tilde{x}, \tilde{y}, \tilde{v})\| \leq \epsilon,
\]
where $\tilde{y} = B\tilde{x}$ and $\tilde{v}$ is constructed in \eqref{eq:theta}, \eqref{eq: recover_v_s} and \eqref{eq: projection_v_s}. Thus we can certify the optimality of $\tilde{x}$.
\end{theorem}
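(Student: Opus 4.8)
The goal is to show that the dual vector $\tilde{v}$ built in \eqref{eq:theta}, \eqref{eq: recover_v_s} and \eqref{eq: projection_v_s} makes all three blocks of $R_{\lambda}(\tilde{x},\tilde{y},\tilde{v})$ of size $O(\|\tilde{\delta}\|)$, so that the tolerance $\|\tilde{\delta}\|\le\epsilon/(1+2L_{\tilde{\gamma}})$ drives the residual below $\epsilon$. I would proceed in three steps, then comment on the main difficulty.

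\emph{Step 1 (a subgradient relation).} Since $\tilde{x}$ solves \eqref{eq: inexact-Plambda}, the sum/chain rule for subdifferentials (valid since $p$ is closed proper convex and, for the regularizers of interest, finite-valued, so that $\partial\big(\lambda p(B\cdot)\big)(\tilde{x})=B^{T}\partial(\lambda p(\tilde{y}))$) produces an $\eta\in\partial(\lambda p(\tilde{y}))$ with $\nabla f(\tilde{x})+B^{T}\eta+\tilde{\delta}=0$, where $\tilde{y}=B\tilde{x}$. The third block $B\tilde{x}-\tilde{y}$ of $R_{\lambda}$ then vanishes identically, and $\tilde{y}_{\tilde{I}}=0$ by the definition \eqref{eq: tilde_I} of $\tilde{I}$, so $\tilde{I}^{c}=\{i:\tilde{y}_{i}\neq 0\}$.

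\emph{Step 2 (locating $\tilde{v}$ via the auxiliary problem).} By Assumption \ref{assumption: uniqueness}, $(\partial(\lambda p(\tilde{y})))_{\tilde{I}^{c}}$ is a singleton, and since both $\eta_{\tilde{I}^{c}}$ and, by \eqref{eq:theta}, $\tilde{v}_{\tilde{I}^{c}}$ lie in it, one has $\tilde{v}_{\tilde{I}^{c}}=\eta_{\tilde{I}^{c}}$, so $\tilde{v}-\eta$ is supported in $\tilde{I}$. Substituting the $\tilde{\gamma}$-block of $\nabla f(\tilde{x})+B^{T}\eta=-\tilde{\delta}$ into the closed form of $(\tilde{v}_{\tilde{I}})_{0}$ in \eqref{eq: recover_v_s} (using $B_{\tilde{I}\tilde{\alpha}}+B_{\tilde{I}\tilde{\gamma}}M_{\tilde{\gamma}\tilde{\alpha}}=0$ and $B_{\tilde{I}\tilde{\beta}}=0$) reveals $(\tilde{v}_{\tilde{I}})_{0}=P\eta_{\tilde{I}}+B_{\tilde{I}\tilde{\gamma}}(B_{\tilde{I}\tilde{\gamma}}^{T}B_{\tilde{I}\tilde{\gamma}})^{-1}\tilde{\delta}_{\tilde{\gamma}}$, where $P$ is the orthogonal projector onto ${\rm Range}(B_{\tilde{I}\tilde{\gamma}})$. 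Hence $\bar{d}:=(I-P)\eta_{\tilde{I}}\in{\rm Null}(B_{\tilde{I}\tilde{\gamma}}^{T})$ is feasible for \eqref{eq: projection_v_s} with $(\tilde{v}_{\tilde{I}})_{0}+\bar{d}=\eta_{\tilde{I}}+B_{\tilde{I}\tilde{\gamma}}(B_{\tilde{I}\tilde{\gamma}}^{T}B_{\tilde{I}\tilde{\gamma}})^{-1}\tilde{\delta}_{\tilde{\gamma}}$, so its objective value is at most $\tfrac12 L_{\tilde{\gamma}}^{2}\|\tilde{\delta}\|^{2}$ because $\eta_{\tilde{I}}\in(\partial(\lambda p(\tilde{y})))_{\tilde{I}}$. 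Optimality of $\tilde{d}$ then forces $\|\tilde{s}\|\le L_{\tilde{\gamma}}\|\tilde{\delta}\|$, and the block vector $\zeta:=\big(\Pi_{(\partial(\lambda p(\tilde{y})))_{\tilde{I}}}(\tilde{v}_{\tilde{I}}),\,\tilde{v}_{\tilde{I}^{c}}\big)$ lies in $\partial(\lambda p(\tilde{y}))$ (reassembled from blocks, again via the singleton property on $\tilde{I}^{c}$) with $\|\tilde{v}-\zeta\|=\|\tilde{s}\|\le L_{\tilde{\gamma}}\|\tilde{\delta}\|$.

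\emph{Step 3 (the remaining blocks and the final estimate).} For the proximal block, $\zeta\in\partial(\lambda p(\tilde{y}))$ is equivalent to $\tilde{y}={\rm Prox}_{\lambda p}(\tilde{y}+\zeta)$, so the $1$-Lipschitz continuity of ${\rm Prox}_{\lambda p}$ gives $\|\tilde{y}-{\rm Prox}_{\lambda p}(\tilde{y}+\tilde{v})\|=\|{\rm Prox}_{\lambda p}(\tilde{y}+\zeta)-{\rm Prox}_{\lambda p}(\tilde{y}+\tilde{v})\|\le\|\zeta-\tilde{v}\|\le L_{\tilde{\gamma}}\|\tilde{\delta}\|$. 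For the stationarity block, a block-wise computation along $\{\tilde{\alpha},\tilde{\beta},\tilde{\gamma}\}$ — using $B_{\tilde{I}\tilde{\gamma}}^{T}\tilde{d}=0$, $B_{\tilde{I}\tilde{\gamma}}^{T}(\tilde{v}_{\tilde{I}})_{0}=-\big((\nabla f(\tilde{x}))_{\tilde{\gamma}}+B_{\tilde{I}^{c}\tilde{\gamma}}^{T}\tilde{v}_{\tilde{I}^{c}}\big)$, $B_{\tilde{I}\tilde{\beta}}=0$, $B_{\tilde{I}\tilde{\alpha}}=-B_{\tilde{I}\tilde{\gamma}}M_{\tilde{\gamma}\tilde{\alpha}}$, and the three blocks of $\nabla f(\tilde{x})+B^{T}\eta=-\tilde{\delta}$ — shows $(\nabla f(\tilde{x})+B^{T}\tilde{v})_{\tilde{\gamma}}=0$ while its $\tilde{\alpha}$- and $\tilde{\beta}$-blocks are $-\tilde{\delta}$ lifted through $M_{\tilde{\gamma}\tilde{\alpha}}$, so the triangle inequality yields $\|\nabla f(\tilde{x})+B^{T}\tilde{v}\|\le(1+L_{\tilde{\gamma}})\|\tilde{\delta}\|$. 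Summing the three block bounds gives $\|R_{\lambda}(\tilde{x},\tilde{y},\tilde{v})\|\le(1+2L_{\tilde{\gamma}})\|\tilde{\delta}\|\le\epsilon$, and this is precisely a certificate of optimality of $\tilde{x}$.

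\emph{Expected obstacle.} The delicate point is Step 2: recognizing that the particular solution $(\tilde{v}_{\tilde{I}})_{0}$ equals, up to the controllable term $B_{\tilde{I}\tilde{\gamma}}(B_{\tilde{I}\tilde{\gamma}}^{T}B_{\tilde{I}\tilde{\gamma}})^{-1}\tilde{\delta}_{\tilde{\gamma}}$, the ${\rm Range}(B_{\tilde{I}\tilde{\gamma}})$-component of a genuine subgradient block $\eta_{\tilde{I}}$, so that the complementary component $(I-P)\eta_{\tilde{I}}$ is an admissible competitor in \eqref{eq: projection_v_s}. This is what converts the abstract hypothesis that $\tilde{x}$ is exactly optimal for a perturbed problem into the quantitative bound on $\tilde{s}$, and hence on the proximal residual; everything else reduces to linear algebra on the partition $\{\tilde{\alpha},\tilde{\beta},\tilde{\gamma}\}$ and triangle inequalities, and Assumption \ref{assumption: uniqueness} is used only to identify $\tilde{v}_{\tilde{I}^{c}}$ with $\eta_{\tilde{I}^{c}}$ and to reassemble $\zeta$ from its blocks.
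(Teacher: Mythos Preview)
Your approach is essentially the paper's: produce a subgradient $\eta$ (the paper writes $\tilde{z}$) from optimality of the perturbed problem, identify $\tilde{v}_{\tilde{I}^c}=\eta_{\tilde{I}^c}$ via Assumption~\ref{assumption: uniqueness}, exhibit a competitor in \eqref{eq: projection_v_s} to bound $\|\tilde{s}\|$, and then estimate the three blocks of $R_\lambda$. Your Step~2 is in fact slightly sharper than the paper's: since $\eta_{\tilde{I}}$ already lies in $(\partial(\lambda p(\tilde{y})))_{\tilde{I}}$, the competitor $\eta_{\tilde{I}}+B_{\tilde{I}\tilde{\gamma}}(B_{\tilde{I}\tilde{\gamma}}^{T}B_{\tilde{I}\tilde{\gamma}})^{-1}\tilde{\delta}_{\tilde{\gamma}}$ is at distance at most $L_{\tilde{\gamma}}\|\tilde{\delta}\|$ from that set, giving $\|\tilde{s}\|\le L_{\tilde{\gamma}}\|\tilde{\delta}\|$; the paper uses a cruder triangle-inequality manipulation and records $2L_{\tilde{\gamma}}\|\tilde{\delta}\|$.

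There is, however, a genuine gap in your Step~3. A direct block computation (using $B_{\tilde{I}\tilde{\alpha}}=-B_{\tilde{I}\tilde{\gamma}}M_{\tilde{\gamma}\tilde{\alpha}}$ and $B_{\tilde{I}\tilde{\gamma}}^{T}\tilde{v}_{\tilde{I}}=B_{\tilde{I}\tilde{\gamma}}^{T}\eta_{\tilde{I}}+\tilde{\delta}_{\tilde{\gamma}}$) gives
\[
(\nabla f(\tilde{x})+B^{T}\tilde{v})_{\tilde{\alpha}}
=-\tilde{\delta}_{\tilde{\alpha}}-M_{\tilde{\gamma}\tilde{\alpha}}^{T}\tilde{\delta}_{\tilde{\gamma}},\qquad
(\nabla f(\tilde{x})+B^{T}\tilde{v})_{\tilde{\beta}}=-\tilde{\delta}_{\tilde{\beta}},\qquad
(\nabla f(\tilde{x})+B^{T}\tilde{v})_{\tilde{\gamma}}=0,
\]
so the stationarity block carries the extra term $M_{\tilde{\gamma}\tilde{\alpha}}^{T}\tilde{\delta}_{\tilde{\gamma}}$, as you correctly flag. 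But the triangle inequality then yields only $\|\tilde{\delta}\|+\|M_{\tilde{\gamma}\tilde{\alpha}}\|\,\|\tilde{\delta}_{\tilde{\gamma}}\|$, and nothing in the setup relates $\|M_{\tilde{\gamma}\tilde{\alpha}}\|$ to $L_{\tilde{\gamma}}=\|B_{\tilde{I}\tilde{\gamma}}(B_{\tilde{I}\tilde{\gamma}}^{T}B_{\tilde{I}\tilde{\gamma}})^{-1}\|$; these are different operator norms (e.g.\ in the convex-clustering instance $M_{\tilde{\gamma}\tilde{\alpha}}$ is a $0$--$1$ assignment matrix whose norm grows with cluster sizes). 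Hence your asserted bound $\|\nabla f(\tilde{x})+B^{T}\tilde{v}\|\le(1+L_{\tilde{\gamma}})\|\tilde{\delta}\|$ is unjustified, and the final estimate does not close with the stated constant $1+2L_{\tilde{\gamma}}$. For transparency: the paper's own proof simply records the $\tilde{\alpha}$-block residual as $-\tilde{\delta}_{\tilde{\alpha}}$, omitting the $M_{\tilde{\gamma}\tilde{\alpha}}^{T}\tilde{\delta}_{\tilde{\gamma}}$ contribution, so it shares this gap; your argument is more careful in spotting the term but still does not control it.
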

\begin{proof}
If $\tilde{x}$ is an optimal solution to \eqref{eq: inexact-Plambda}, then $(\tilde{x}, \tilde{y})$ is an optimal solution to
\begin{equation}
\label{eq: inexact-Plambda-reform}
\begin{array}{ll}
\min_{x \in \mathbb{R}^n, y \in \mathbb{R}^m} & f(x) + \lambda p(y) + \langle x, \tilde{\delta} \rangle\\
{\rm s.t.} & Bx - y = 0.
\end{array}
\end{equation}
Then, there exists a $\tilde{z} \in \mathbb{R}^m$ which satisfies the following KKT system:
\begin{equation}
\label{eq: KKT-inexact Plambda}
\left\{
\begin{array}{l}
(\nabla f(\tilde{x}))_{\tilde{\alpha}} + B_{\tilde{I}\tilde{\alpha}}^T\tilde{z}_{\tilde{I}} + B_{\tilde{I}^c\tilde{\alpha}}^T\tilde{z}_{\tilde{I}^c} + \tilde{\delta}_{\tilde{\alpha}} = 0,\\[3pt]
(\nabla f(\tilde{x}))_{\tilde{\beta}} + B^T_{\tilde{I}^c\tilde{\beta}}\tilde{z}_{\tilde{I}^c} + \tilde{\delta}_{\tilde{\beta}} = 0,\\[3pt]
(\nabla f(\tilde{x}))_{\tilde{\gamma}} + B_{\tilde{I}\tilde{\gamma}}^T\tilde{z}_{\tilde{I}} + B_{\tilde{I}^c\tilde{\gamma}}^T\tilde{z}_{\tilde{I}^c} + \tilde{\delta}_{\tilde{\gamma}} = 0,\\[3pt]
\tilde{z} \in \partial(\lambda p(\tilde{y})),\\
B\tilde{x} - \tilde{y} = 0.
\end{array}
\right.
\end{equation}
By Assumption \ref{assumption: uniqueness} and the fact
$\tilde{I}^c = \{ i\in [m] \mid \tilde{y}_i\not = 0\}$, $(\partial(\lambda p)(\tilde{y}))_{\tilde{I}^c}$ is a singleton. Thus we must have
\[
\tilde{z}_{\tilde{I}^c} = \tilde{v}_{\tilde{I}^c}.
\]
Therefore, $\bar{v}_{\tilde{I}} = \tilde{z}_{\tilde{I}} - B_{\tilde{I}\tilde{\gamma}}(B^T_{\tilde{I}\tilde{\gamma}}B_{\tilde{I}\tilde{\gamma}})^{-1}\tilde{\delta}_{\tilde{\gamma}}$ is a solution to
\[
(\nabla f(\tilde{x}))_{\tilde{\gamma}} + B_{\tilde{I}\tilde{\gamma}}^Tv_{\tilde{I}} + B_{\tilde{I}^c\tilde{\gamma}}^T\tilde{v}_{\tilde{I}^c} = 0.
\]
Since $\tilde{v}_{\tilde{I}}$ is a solution to \eqref{eq: projection_v_s}, we have
\[
\begin{array}{lll}
\|\tilde{s}\| &=& \|\tilde{v}_{\tilde{I}} - \Pi_{(\partial(\lambda p(\tilde{y})))_{\tilde{I}}}(\tilde{v}_{\tilde{I}})\|\\
& \leq & \|\bar{v}_{\tilde{I}} - \Pi_{(\partial(\lambda p(\tilde{y})))_{\tilde{I}}}(\bar{v}_{\tilde{I}})\|\\[3pt]
&=& \|(\tilde{z}_{\tilde{I}} - B_{\tilde{I}\tilde{\gamma}}(B^T_{\tilde{I}\tilde{\gamma}}B_{\tilde{I}\tilde{\gamma}})^{-1}\tilde{\delta}_{\tilde{\gamma}}) - \Pi_{(\partial(\lambda p(\tilde{y})))_{\tilde{I}}}(\tilde{z}_{\tilde{I}} - B_{\tilde{I}\tilde{\gamma}}(B^T_{\tilde{I}\tilde{\gamma}}B_{\tilde{I}\tilde{\gamma}})^{-1}\tilde{\delta}_{\tilde{\gamma}})\|\\[3pt]
&=&\|- B_{\tilde{I}\tilde{\gamma}}(B^T_{\tilde{I}\tilde{\gamma}}B_{\tilde{I}\tilde{\gamma}})^{-1}\tilde{\delta}_{\tilde{\gamma}} + (\Pi_{(\partial(\lambda p(\tilde{y})))_{\tilde{I}}}(\tilde{z}_{\tilde{I}}) - \Pi_{(\partial(\lambda p(\tilde{y})))_{\tilde{I}}}(\tilde{z}_{\tilde{I}} - B_{\tilde{I}\tilde{\gamma}}(B^T_{\tilde{I}\tilde{\gamma}}B_{\tilde{I}\tilde{\gamma}})^{-1}\tilde{\delta}_{\tilde{\gamma}}))\|\\[3pt]
&\leq& 2\|B_{\tilde{I}\tilde{\gamma}}(B^T_{\tilde{I}\tilde{\gamma}}B_{\tilde{I}\tilde{\gamma}})^{-1}\tilde{\delta}_{\tilde{\gamma}}\|\\
& \leq & 2L_{\tilde{\gamma}}\|\tilde{\delta}_{\tilde{\gamma}}\|.
\end{array}
\]
On the other hand, by the construction, we know that $(\tilde{x}, \tilde{y}, \tilde{v}, \tilde{s})$ satisfies the following KKT system
\[
\left\{
\begin{array}{l}
(\nabla f(\tilde{x}))_{\tilde{\alpha}} + B_{\tilde{I}\tilde{\alpha}}^T\tilde{v}_{\tilde{I}} + B_{\tilde{I}^c\tilde{\alpha}}^T\tilde{v}_{\tilde{I}^c} + \tilde{\delta}_{\tilde{\alpha}} = 0,\\[3pt]
(\nabla f(\tilde{x}))_{\tilde{\beta}} + B^T_{\tilde{I}^c\tilde{\beta}}\tilde{v}_{\tilde{I}^c} + \tilde{\delta}_{\tilde{\beta}} = 0,\\[3pt]
(\nabla f(\tilde{x}))_{\tilde{\gamma}} + B_{\tilde{I}\tilde{\gamma}}^T\tilde{v}_{\tilde{I}} + B_{\tilde{I}^c\tilde{\gamma}}^T\tilde{v}_{\tilde{I}^c} = 0,\\[3pt]
\tilde{v}_{\tilde{I}} - \tilde{s} \in (\partial(\lambda p(\tilde{y})))_{\tilde{I}}, \quad \tilde{v}_{\tilde{I}^c} \in (\partial(\lambda p(\tilde{y})))_{\tilde{I}^c}.\\
B\tilde{x} - \tilde{y} = 0.
\end{array}
\right.
\]
Then, we have
\[
\begin{array}{lll}
\|R_{\lambda}(\tilde{x}, \tilde{y}, \tilde{v})\| & = & \|(\nabla f(\tilde{x}) + B^T\tilde{v}, \quad \tilde{y} - {\rm Prox}_{\lambda p}(\tilde{y} + \tilde{v}), \quad B\tilde{x} - \tilde{y})\|\\
&\leq& \|(\tilde{\delta}_{\tilde{\alpha}}, \tilde{\delta}_{\tilde{\beta}}, 0)\| + \|\tilde{s}\|\\
& \leq & \|\tilde{\delta}\| + 2L_{\tilde{\gamma}}\|\tilde{\delta}\|\\
& \leq & \epsilon.
\end{array}
\]
This completes the proof of the theorem.
\end{proof}

  Now, we present the enhanced AS technique in Algorithm \ref{alg:enhancedAS}.
\begin{algorithm}[!ht]\small
	\caption{An enhanced adaptive sieving for solving \eqref{eq: two-block-structured-reform} with a fixed $\lambda > 0$}
	\label{alg:enhancedAS}
	\begin{algorithmic}[1]
		\STATE \textbf{Input}: a given hyper-parameter $\lambda > 0$ and a given tolerance $\epsilon > 0$.
		\STATE \textbf{Output}: $(x^*(\lambda), y^*(\lambda), z^*(\lambda))$.
		\STATE \textbf{Initialization}: Generate an initial index set by a predefined initialization strategy: $I^0(\lambda) \subseteq [m]$.
		\FOR{$i = 0, 1, 2, \dots$}
		\STATE \textbf{1}.For the given index set $I^i(\lambda)$, construct the index partition $\{\alpha^i, \beta^i, \gamma^i\}$ and the corresponding $M_{\gamma^i\alpha^i}$.
		\STATE \textbf{2}. Apply any well designed algorithm to solve problem \eqref{eq: AS-small} with $\{I^{i}(\lambda), \alpha^i, \beta^i, \gamma^i, M_{\gamma^i\alpha^i}\}$ and obtain an inexact solution $( \hat{x}^i_{\alpha_i}, \hat{x}^i_{\beta_i}, \hat{y}^i_{(I^i)^c(\lambda)}, \hat{\xi}^i)$
        which satisfies the corresponding KKT system \eqref{eq: KKT-inexactRPI} with the latent error terms $(\hat{\delta}_1^i, \hat{\delta}_2^i, \hat{\delta}_3^i)$ such that $\|\hat{\delta}^i_1\| + \|\hat{\delta}^i_2\| + \|\hat{\delta}^i_3\| \leq \epsilon$.
        \STATE \textbf{3}. Recover a solution $(\bar{x}^i, \bar{y}^i)$ by \eqref{eq: recover_primal}.
        \IF{$i > 1$ and $|F_{\lambda}(\bar{x}^i) - F_{\lambda}(\bar{x}^{i-1})| \leq \epsilon$}
        \STATE Define $(\tilde{x}^i, \tilde{y}^i) = (\bar{x}^i, B\bar{x}^i)$ and $\tilde{I}^i = \{i \in [m] \mid \tilde{y}_i = 0\}$. Construct $\{\tilde{\alpha}^i, \tilde{\beta}^i, \tilde{\gamma}^i, M_{\tilde{\gamma}^i\tilde{\alpha}^i}
        \}$.
        \STATE Construct $(\tilde{v}^i, \tilde{s}^i)$ by \eqref{eq:theta}, \eqref{eq: recover_v_s} and \eqref{eq: projection_v_s}.
        \IF{$\|R_{\lambda}(\tilde{x}^i, \tilde{y}^i, \tilde{v})\| \leq \epsilon$}
        \STATE Set $(x^*(\lambda), y^*(\lambda), z^*(\lambda)) = (\tilde{x}^i, \tilde{y}^i, \tilde{v}^i)$.
        \STATE \textbf{break}.
        \ENDIF
        \ENDIF
        \STATE \textbf{4}. Recover a pair $(\bar{u}^i, \bar{w}^i)$ by \eqref{eq: u_Ic_def} and \eqref{eq: recover_uw}, respectively.
        \IF{$\|R_{\lambda}(\bar{x}^i, \bar{y}^i, \bar{u}^i)\| \leq \epsilon$}
        \STATE Set $(x^*(\lambda), y^*(\lambda), z^*(\lambda)) = (\bar{x}^i, \bar{y}^i, \bar{u}^i)$.
        \STATE \textbf{break}.
        \ELSE
        \STATE Create $J^{i}(\lambda)$:
        \begin{equation}\label{eq: AS-checking-eas}
        J^i(\lambda) = \{j \in I^i(\lambda) \mid \bar{u}^i_j \not\in \partial ( \lambda p(\bar{y}^i))_j \}.
        \end{equation}
        \IF{$J^i(\lambda) \neq \emptyset$}
        \STATE Update $I^{i+1}(\lambda)$ as:
        \[
        I^{i+1}(\lambda) \leftarrow I^i(\lambda) \backslash J^{i}(\lambda).
        \]
        \ELSE
        \STATE Set $(x^*(\lambda), y^*(\lambda), z^*(\lambda)) = (\bar{x}^i, \bar{y}^i, \bar{u}^i)$.
        \STATE \textbf{break}.
        \ENDIF
        \ENDIF
		\ENDFOR
	\RETURN{$(x^*(\lambda), y^*(\lambda), z^*(\lambda))$.}
	\end{algorithmic}
\end{algorithm}
As a byproduct of Theorem \ref{Theorem: answers to Q2}, Theorem \ref{thm: finite-convergence} and Theorem \ref{thm: early-stop}, we have the following property.

\begin{theorem}
\label{thm: finite-convergence-enhanced}
For a given $\epsilon > 0$,  Algorithm  \ref{alg:enhancedAS} is guaranteed to converge in finite number of iterations. The number of sieving iterations of Algorithm \ref{alg:enhancedAS} is no more than that of Algorithm \ref{alg:screening}. Moreover, the obtained pair $(x^*(\lambda), y^*(\lambda), z^*(\lambda))$ is a solution to \eqref{eq: two-block-structured-reform} in the sense that
\[
\|R_{\lambda}(x^*(\lambda), y^*(\lambda), z^*(\lambda))\| \leq \epsilon.
\]
\end{theorem}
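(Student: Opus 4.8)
The plan is to obtain everything as a corollary of Theorems \ref{Theorem: answers to Q2}, \ref{thm: finite-convergence} and \ref{thm: early-stop}, by regarding Algorithm \ref{alg:enhancedAS} as Algorithm \ref{alg:screening} with one extra early-termination test inserted (Step 3). First I would set up a coupling: run both algorithms from the same initial index set $I^0(\lambda)$ with the same subproblem solver, so that at each iteration $i$ (before either has terminated) they produce identical index partitions $\{\alpha^i,\beta^i,\gamma^i,M_{\gamma^i\alpha^i}\}$, identical inexact solutions $(\hat x^i_{\alpha^i},\hat x^i_{\beta^i},\hat y^i_{(I^i)^c},\hat\xi^i)$ with identical latent errors $(\hat\delta^i_1,\hat\delta^i_2,\hat\delta^i_3)$ satisfying $\|\hat\delta^i_1\|+\|\hat\delta^i_2\|+\|\hat\delta^i_3\|\le\epsilon$, and hence identical recovered iterates $(\bar x^i,\bar y^i,\bar u^i,\bar w^i)$. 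The only structural difference is the Step 3 block of Algorithm \ref{alg:enhancedAS}, which either triggers an additional \textbf{break} or leaves $I^i(\lambda)$ untouched; the index-set update in Step 4 of Algorithm \ref{alg:enhancedAS} is verbatim that of Algorithm \ref{alg:screening}. Hence the index-set sequence generated by Algorithm \ref{alg:enhancedAS} is an initial segment of the one generated by Algorithm \ref{alg:screening}, which already gives the second assertion of the theorem: Algorithm \ref{alg:enhancedAS} performs no more sieving iterations than Algorithm \ref{alg:screening}.

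Next I would argue finite termination. Every time a sieving step is actually executed we have $J^i(\lambda)\neq\emptyset$ and $J^i(\lambda)\subseteq I^i(\lambda)$, so $|I^{i+1}(\lambda)|<|I^i(\lambda)|$; since $|I^0(\lambda)|\le m$, after at most $m$ such steps $I^i(\lambda)$ can no longer shrink, and in particular $I^i(\lambda)=\emptyset$ forces $J^i(\lambda)=\emptyset$ and a break. (Equivalently: by Theorem \ref{thm: finite-convergence} Algorithm \ref{alg:screening} terminates finitely, and by the coupling above the run of Algorithm \ref{alg:enhancedAS} is a truncation of it.) So Algorithm \ref{alg:enhancedAS} stops after finitely many iterations.

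It remains to certify the output. Algorithm \ref{alg:enhancedAS} sets $(x^*(\lambda),y^*(\lambda),z^*(\lambda))$ and breaks at exactly one of three branches. (i) The break inside Step 3 is taken only after the test $\|R_\lambda(\tilde x^i,\tilde y^i,\tilde v^i)\|\le\epsilon$ has been verified, with $\tilde y^i=B\tilde x^i$ by construction; Theorem \ref{thm: early-stop} guarantees this test is not vacuous once the subproblem is solved accurately enough, but for correctness of the returned triple the verified inequality already suffices. (ii) The residual break in Step 4 is likewise taken only after $\|R_\lambda(\bar x^i,\bar y^i,\bar u^i)\|\le\epsilon$ has been verified. (iii) If $J^i(\lambda)=\emptyset$ in Step 4, then the contrapositive of Theorem \ref{Theorem: answers to Q2}, applied at iteration $i$ to the perturbed reduced problem whose KKT system is \eqref{eq: KKT-inexactRPI} and whose errors satisfy $\|\hat\delta^i_1\|+\|\hat\delta^i_2\|+\|\hat\delta^i_3\|\le\epsilon$, yields $\|R_\lambda(\bar x^i,\bar y^i,\bar u^i)\|\le\epsilon$. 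In all three cases $\|R_\lambda(x^*(\lambda),y^*(\lambda),z^*(\lambda))\|\le\epsilon$, as claimed.

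The main obstacle, such as it is, lies in making the sieving-iteration comparison rigorous: it must be stated relative to a fixed initialization and a fixed subproblem-solving oracle, and one has to observe that inserting the Step 3 test can only shorten, never lengthen, the run, precisely because that test never alters $I^i(\lambda)$. The remaining work is routine bookkeeping: recording that the recovered iterates satisfy $\bar y^i=B\bar x^i$ (and $\tilde y^i=B\tilde x^i$) so that the guard $|F_\lambda(\bar x^i)-F_\lambda(\bar x^{i-1})|\le\epsilon$ in Step 3 and the residual tests in Steps 3--4 are all evaluated consistently with the recovery formula \eqref{eq: recover_primal}, and then assembling Theorems \ref{Theorem: answers to Q2}, \ref{thm: finite-convergence} and \ref{thm: early-stop} as above.
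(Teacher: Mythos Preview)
Your proposal is correct and follows precisely the route the paper itself indicates: the paper does not give a separate proof of this theorem but simply states it ``as a byproduct of Theorem~\ref{Theorem: answers to Q2}, Theorem~\ref{thm: finite-convergence} and Theorem~\ref{thm: early-stop},'' and your coupling argument and case analysis spell out exactly that byproduct. In fact your write-up is more detailed than anything the paper provides, and the only caveat worth noting is the one you already flag: the comparison of sieving iterations is meaningful only relative to a fixed initialization and a fixed deterministic subproblem oracle.
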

\begin{remark}
 We close this subsection by making some   remarks.
 \begin{itemize}
     \item[1.] The enhanced adaptive sieving technique described in Algorithm \ref{alg:enhancedAS} is a rigorous implementation of the aforementioned principal idea which simultaneously answers all the five questions we asked earlier in Section \ref{sec: AS-General}.
     \item[2.] A natural question is, why should we still perform
     the sieving based on  $\bar{u}$ instead of $\tilde{v}$ directly? Now we explain the reason. If we define
     \[
     \tilde{J}(\lambda) = \{ j \in \tilde{I} \mid \tilde{v}_j \not\in (\partial(\lambda p(\tilde{y})))_j\},
     \]
     and assuming that $\tilde{J}(\lambda) \neq \emptyset$, we cannot guarantee that $\tilde{J}(\lambda) \bigcap I \neq \emptyset$, which is required to update the index set $I$.
     \item[3.] The main idea for the enhanced algorithm is to
     certify the optimality of the current solution if
     it is an optimal solution of \eqref{eq: two-block-structured-reform}. Then we can stop the sieving procedure earlier, comparing to Algorithm \ref{alg:screening}. It is a natural idea that we only try to certify the optimality of the current obatined solution if it is the solution to \eqref{eq: two-block-structured-reform} with high probability. This is implied by the condition $|F_{\lambda}(\bar{x}^i) - F_{\lambda}(\bar{x}^{i-1})| < \epsilon$, which is used in Algorithm \ref{alg:enhancedAS}. The reason we use the difference of the consecutive function values instead of the solution vectors is because the optimal solutions of \eqref{eq: two-block-structured-reform} may not be unique, but they all have the same objective function value.
    \item[4.] In practice, the AS technique is sometimes better than the enhanced AS technique in terms of running time although the enhanced AS could potentially reduce the number of AS iterations. But of course, the enhanced AS technique is the one with a better theoretical guarantee.  Detailed empirical comparison of these techniques can be found in the numerical experiments.
 \end{itemize}
\end{remark}

\subsection{An Accelerated Proximal Gradient Algorithm for Dual Variables Recovery}
As aforementioned, the key step to recover the
dual variables and applying the AS technique is to recover $\bar{u}$ (or $\tilde{v}$) via solving the optimization problem \eqref{eq: project_dbar} (or \eqref{eq: projection_v_s}). In this paper, we adopt the accelerated proximal gradient (APG) algorithm \cite{beck2009fast,nesterov1983method} to solve it. Since the optimization problem \eqref{eq: projection_v_s} has the same form as \eqref{eq: project_dbar}, we use the problem \eqref{eq: project_dbar} as an example.

First of all, we could rewrite the constrained optimization problem \eqref{eq: project_dbar} equivalently as
\begin{equation}
\label{eq: proj_d_unconstrained}
\min_d h(d) + \delta_{{\rm Null}(B_{I\gamma}^T)}(d),
\end{equation}
where $h(d) = \frac{1}{2}\|((\bar{u}_{I})_0 + d) - \Pi_{\partial(\lambda p(\bar{y}))_I}((\bar{u}_I)_0 + d)\|^2$ and $\delta_{{\rm Null}(B_{I\gamma}^T)}(\cdot)$ is the indicator function of the Null space of $B_{I\gamma}^T$.

In order to apply the APG algorithm, we need to derive the proximal mapping of the indicator function $\delta_{{\rm Null}(B_{I\gamma}^T)}(\cdot)$, which is the projection operator onto the null space of $B_{I\gamma}^T$. Since $B_{I\gamma}^T$ is of full row rank, the projection of a given vector $a \in \mathbb{R}^{|I|}$ onto the null space of $B_{I\gamma}^T$ is computed by
\[
\Pi_{{\rm Null}(B_{I\gamma}^T)}(a) = (I - B_{I\gamma}(B_{I\gamma}^TB_{I\gamma})^{-1}B_{I\gamma}^T)a.
\]
On the other hand, the function $h(\cdot)$ is continuously differentiable and the gradient of $h(\cdot)$ is
\[
\nabla h(d) = ((\bar{u}_I)_0 + d) - \Pi_{\partial(\lambda p(\tilde{y}))_I}((\bar{u}_I)_0 + d) =  \Pi_{(\partial (\lambda p(\bar{y}))_I)^{\circ}}((\bar{u}_I)_0 + d).
\]
Here, $(\partial (\lambda p(\bar{y}))_I)^{\circ}$ is the polar of the closed convex set $\partial (\lambda p(\bar{y}))_I$ and the second equality comes from the Moreau identity \cite{moreau1965proximite}. Thus, $\nabla h(\cdot)$ is Lipschitz continuous with modulus $1$ \cite{zarantonello1971projections}. The APG algorithm for solving the optimization problem \eqref{eq: proj_d_unconstrained} is shown in Algorithm \ref{alg: apg_projection}.
\begin{algorithm}[H]\small
\caption{Accelerated proximal gradient algorithm for \eqref{eq: proj_d_unconstrained}}
\label{alg: apg_projection}
\begin{algorithmic}
\STATE {\textbf{Input}}: $\epsilon > 0$ and maxiter.
\STATE {\textbf{Output}}: $\bar{d}$.
\STATE {\textbf{Initialization}}: $L = 1$, $d^0 = 0$, $\hat{d}^1 = d^0$, $k = 0$ and $t_1 = 1$.
\WHILE {$k < {\rm maxiter}$}
\STATE $k = k + 1$,
\STATE $d^k = \Pi_{{\rm Null}(B_{I\gamma}^T)}(\hat{d}^k - \frac{1}{L}\nabla h(\hat{d}^k)),$
\IF {$\max(\|d^k - d^{k-1}\|, \|((\bar{u}_I)_0 + d^k) - \Pi_{\partial(\lambda p(\bar{y}))_I}((\bar{u}_I)_0 + d^k)\|) \leq \epsilon$}
\STATE break,
\ENDIF
\STATE $t_{k+1} = \frac{1 + \sqrt{1+4t_k^2}}{2}$,
\STATE $\hat{d}^{k+1} = d^k + (\frac{t_k - 1}{t_{k+1}})(d^k - d^{k-1})$.
\ENDWHILE
\STATE $\bar{d} = d^k$,
\RETURN{$\bar{d}$.}
\end{algorithmic}
\end{algorithm}
\noindent It is well known that the sequence $\{d^k\}$ generated by the APG algorithm have the following $O(1/k^2)$ complexity \cite{beck2009fast,nesterov1983method}.
\begin{theorem}
Let $\{d^k\}$ and $\{y^k\}$ be the sequences generated by Algorithm \ref{alg: apg_projection}. Then for any $k \geq 1$, we have
\begin{equation}\label{eq: APG_rate}
  h(d^k) - h(d^*) \leq \frac{2\|d^*\|^2}{(k+1)^2},
\end{equation}
where $d^*$ is any optimal solution to \eqref{eq: proj_d_unconstrained}.
\end{theorem}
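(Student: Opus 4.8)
The plan is to recognize Algorithm~\ref{alg: apg_projection} as an instance of the classical accelerated proximal gradient (FISTA) scheme of Nesterov and Beck--Teboulle applied to the composite problem $\min_d F(d):=h(d)+g(d)$ with $g:=\delta_{{\rm Null}(B_{I\gamma}^T)}$, and to invoke (or briefly reproduce) the standard $O(1/k^2)$ analysis with constant step size. First I would verify the hypotheses of that analysis, all of which have in fact already been recorded in the paragraph preceding the algorithm: $g$ is a closed proper convex function (the indicator of the linear subspace ${\rm Null}(B_{I\gamma}^T)$) whose proximal map with any positive parameter is exactly the orthogonal projection $\Pi_{{\rm Null}(B_{I\gamma}^T)}$, which is precisely the operator used to update $d^k$; and $h(d)=\tfrac12{\rm dist}^2\big((\bar u_I)_0+d,\ \partial(\lambda p(\bar y))_I\big)$ is a convex differentiable function with $\nabla h(d)=\Pi_{(\partial(\lambda p(\bar y))_I)^{\circ}}((\bar u_I)_0+d)$ nonexpansive, i.e. $\nabla h$ is $1$-Lipschitz. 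Hence the step size $1/L=1$ hard-coded in the algorithm is admissible, and the momentum update $t_{k+1}=\tfrac{1+\sqrt{1+4t_k^2}}{2}$, $t_1=1$, $\hat d^{k+1}=d^k+\tfrac{t_k-1}{t_{k+1}}(d^k-d^{k-1})$ is the standard one.

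Second, I would carry out (or cite from \cite{beck2009fast,nesterov1983method}) the usual two-step telescoping argument. The core is the basic proximal-gradient inequality: writing $p(y):=\Pi_{{\rm Null}(B_{I\gamma}^T)}\big(y-\nabla h(y)\big)$, convexity of $h$, the descent lemma for $h$ at modulus $1$, and the variational characterization of the projection give, for every $x$,
\begin{equation*}
F(x)-F(p(y))\ \ge\ \tfrac12\|p(y)-y\|^2+\langle y-x,\,p(y)-y\rangle .
\end{equation*}
Applying this at $y=\hat d^k$, once with $x=d^*$ and once with $x=d^{k-1}$, and combining the two via a short computation that uses the defining identity $t_{k+1}^2-t_{k+1}=t_k^2$ of the momentum parameters, one obtains the key monotone relation
\begin{equation*}
t_k^2\big(F(d^k)-F(d^*)\big)-t_{k+1}^2\big(F(d^{k+1})-F(d^*)\big)\ \ge\ \tfrac12\big(\|u^{k+1}\|^2-\|u^k\|^2\big),
\end{equation*}
where $u^k:=t_k d^k-(t_k-1)d^{k-1}-d^*$. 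Telescoping from $1$ to $k-1$ and using $t_1=1$ yields $t_k^2\big(F(d^k)-F(d^*)\big)\le \big(F(d^1)-F(d^*)\big)+\tfrac12\|d^1-d^*\|^2$, and a second use of the basic inequality at $y=\hat d^1=d^0$ with $x=d^*$ bounds the right-hand side by $\tfrac12\|d^0-d^*\|^2$. Finally an elementary induction on the $t_k$ recursion gives $t_k\ge (k+1)/2$, hence
\begin{equation*}
F(d^k)-F(d^*)\ \le\ \frac{2\|d^0-d^*\|^2}{(k+1)^2}.
\end{equation*}
Since the algorithm starts at $d^0=0$, this is $2\|d^*\|^2/(k+1)^2$, and since all iterates $d^k$ and every optimal $d^*$ lie in ${\rm Null}(B_{I\gamma}^T)$, the indicator term $g$ vanishes at both points, so $F(d^k)-F(d^*)=h(d^k)-h(d^*)$, which is the claimed bound.

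There is no genuine obstacle here: the statement is the textbook FISTA rate and each structural hypothesis it requires has already been established in the text. The only two points deserving a line of care are (i) noting that the early-termination test inside the while-loop is irrelevant to the estimate, because the bound is proved for each produced iterate $d^k$ independently of when the loop stops; and (ii) the identification $h(d^k)-h(d^*)=F(d^k)-F(d^*)$ via feasibility of $d^k$ and $d^*$. Accordingly I would keep the write-up short — either citing \cite{beck2009fast,nesterov1983method} for the two displayed inequalities, or including the half-page telescoping computation for completeness — and simply specialise $L=1$, $d^0=0$ at the end.
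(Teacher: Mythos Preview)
Your proposal is correct and aligns with the paper's treatment: the paper does not give a proof of this theorem at all but simply cites \cite{beck2009fast,nesterov1983method} for the well-known $O(1/k^2)$ FISTA rate, and your write-up is exactly a careful instantiation of that cited argument with the specific data $L=1$, $g=\delta_{{\rm Null}(B_{I\gamma}^T)}$, $d^0=0$. Your two closing observations (that the stopping test is irrelevant to the bound, and that feasibility of $d^k,d^*$ lets one replace $F$ by $h$) are the only points beyond a direct citation, and both are sound.
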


\begin{remark}
  Although we need to solve an additional optimization problem \eqref{eq: proj_d_unconstrained} in order to apply the AS technique, the computational cost is affordable. Now, we explain the key insights behind. In the enhanced AS technique, if we do obtain an optimal solution of \eqref{eq: two-block-structured-reform} via solving the current subproblem \eqref{eq:AS-reduced}, then we must have $h(d^*) < \frac{\epsilon^2}{2}$ by Theorem \ref{thm: early-stop}. Moreover, $\|d^*\|$ must be relatively small. By the above complexity result, we could obtain an inexact solution to the problem \eqref{eq: proj_d_unconstrained} in several cheap iterations. On the other hand, if the objective function value of \eqref{eq: proj_d_unconstrained} is still large after several iterations (say $10$ iterations), we can terminate the algorithm since this phenomenon indicates that we have not yet obtained an optimal solution to the problem \eqref{eq: two-block-structured-reform}. In other words, the current index set $I$ is incorrect and we need to update it by removing
  violating indices. In short, although we need to solve an additional optimization problem, we only need to run APG for several iterations.

  The main computational cost for each iteration of APG is from two projections. For most of the commonly used regularizers $p$ (for example, $\ell_1$ norm, $\ell_2$ norm), the projection of a given vector onto the subdifferential set is very cheap. On the other hand, in order to compute the projection onto the null space of $B^T_{I\gamma}$, the main computational cost is from computing $(B^T_{I\gamma}B_{I\gamma})^{-1}$. However, as we mentioned earlier,the matrix $B$ is usually very sparse in many applications, the sparse Cholesky decomposition is not costly. Thus, the computational cost for one iteration of APG is affordable, even for large scale problems. This is also one of the main reason for us to adopt APG to solve the optimization problem \eqref{eq: proj_d_unconstrained}.
\end{remark}

\subsection{An Adaptive Sieving Technique for Solution Path}
It is not difficult for us to generalize Algorithm \ref{alg:enhancedAS} to obtain a solution path for problem \eqref{eq: two-block-structured-reform} with a sequence of
parameters $\lambda_1 > \lambda_2 > \cdots > \lambda_l > 0$. The key
idea is that, if we obtain a solution $(x^*(\lambda_i), y^*(\lambda_i), z^*(\lambda_i))$ for \eqref{eq: two-block-structured-reform} with $\lambda = \lambda_i$, then, we can initialize the index set $I^0_{i+1}$ in Algorithm \ref{alg:enhancedAS} for $\lambda = \lambda_{i+1}$ as
\begin{equation}
\label{eq: initial_I_i}
I^{0}_{i+1} := \{k \in [m] \mid  |(Bx^*(\lambda_i))_k| < \hat{\epsilon}\},
\end{equation}
where $\hat{\epsilon} > 0$ is a given tolerance. The algorithm for applying the AS technique (or the EAS technique) to generate a solution path is shown in Algorithm \ref{alg: screening-path}.
\begin{algorithm}[!ht]
\small
\caption{Generate solution path for \eqref{eq: two-block-structured-reform} with the AS technique (or the EAS technique)}
\label{alg: screening-path}
\begin{algorithmic}
\STATE {\textbf{Input}}: $\epsilon > 0$, $\hat{\epsilon} > 0$ and a sequence $\lambda_1 > \lambda_2 > \cdots > \lambda_l > 0$.
\STATE {\textbf{Output}}: A solution path for \eqref{eq: two-block-structured-reform}: $\{(x^*(\lambda_1), y^*(\lambda_1), z^*(\lambda_1)), \dots, (x^*(\lambda_l), y^*(\lambda_l), z^*(\lambda_l))\}$.
\STATE {\textbf{Initialization}}: Initialize index set $I^0(\lambda_1) \subseteq [m]$ by a predefined initialization strategy.
\FOR {$k = 1, 2, \dots, l$}
\STATE \textbf{Step 1}. Obtain $(x^*(\lambda_k), y^*(\lambda_k), z^*(\lambda_k))$ by calling Algorithm \ref{alg:screening} (or Algorithm \ref{alg:enhancedAS}) with $\{\lambda, \epsilon, I^0(\lambda)\} = \{\lambda_k, \epsilon, I^0(\lambda_k)\}$.
\IF{$k < l$}
\STATE \textbf{Step 2}. Define
\[
I^{0}(\lambda_{k+1}) := \{j \in [m] \mid  |(Bx^*(\lambda_k))_j| < \hat{\epsilon}\}.
\]
\ENDIF
\ENDFOR
\RETURN{$\{(x^*(\lambda_1), y^*(\lambda_1), z^*(\lambda_1)), \dots, (x^*(\lambda_l), y^*(\lambda_l), z^*(\lambda_l))\}$.}
\end{algorithmic}
\end{algorithm}

\section{Adaptive Sieving and Enhanced Adaptive Sieving Technique for Convex Clustering}
In this section, we will show how to apply the AS technique and the EAS technique on the convex clustering model \eqref{eq: convex-clustering}.

Denote $\mathcal{E} := \{(i, j) \mid w_{ij} > 0, 1 \leq i < j \leq n\}$. Then $\mathcal{G} = ([n], \mathcal{E})$ forms an undirected graph and the weighted convex clustering model \eqref{eq: convex-clustering} is equivalent to:
\begin{equation}\label{eq: convex-clustering-edge}
\min_{X \in \mathbb{R}^{d \times N}} \frac{1}{2}\sum_{i = 1}^{N}\|X_{:i} - A_{:i}\|_2^2 + \lambda \sum_{(i, j) \in \mathcal{E}} w_{ij}\|X_{:i} - X_{:j}\|_p.
\end{equation}
We enumerate the index pairs in $\mathcal{E}$ by the lexicographic order and denote by $l(i, j)$ for the pair $(i, j)$. Define the linear map $\mathcal{B}: \mathbb{R}^{d \times N} \to \mathbb{R}^{d \times |\mathcal{E}|}$ by
$$(\mathcal{B}(X))_{:,l(i, j)} = X_{:i} - X_{:j},$$
and the node-arc incidence matrix $J \in \mathbb{R}^{N \times|\mathcal{E}|}$ as
\begin{equation}\label{eq: define-node-arc}
J_{k, l(i,j)} = \left\{
\begin{array}{ll}
1, & \mbox{if $k = i$},\\
-1, & \mbox{if $k = j$},\\
0, & \mbox{otherwise}.
\end{array}
\right.
\end{equation}
Then, for any given $X \in \mathbb{R}^{d \times N}$ and $Z \in \mathbb{R}^{d \times |\mathcal{E}|}$, we have
\begin{equation}
\label{eq: map_B}
\mathcal{B}(X) = XJ, \quad \mathcal{B}^*(Z) = ZJ^T.
\end{equation}
It is not difficult to see that the convex clustering model
\eqref{eq: convex-clustering-edge} is a special case of \eqref{eq: two-block-structured}.
\subsection{A Construction of the Reduced Problem}
 The main step for constructing the reduced subproblem is to construct the index sets $\alpha, \beta, \gamma$ and the corresponding matrix $M_{\gamma\alpha}$. For a given index set
\begin{equation}
\label{eq: index_I_cc}
I := \{l(i, j)\} \subseteq \{1, 2, \dots, |\mathcal{E}|\},
\end{equation}
 we can construct a subgraph $\hat{\mathcal{G}} \subseteq \mathcal{G}$ with edges $\hat{\mathcal{E}}:= \{(i, j) \mid l(i, j) \in I\}$ and all the corresponding nodes. Then, we can decompose the graph $\hat{\mathcal{G}}$ as
$$\hat{\mathcal{G}} = \hat{\mathcal{G}}_1 \cup \hat{\mathcal{G}}_2 \cup \cdots \cup \hat{\mathcal{G}}_s,$$
where $\hat{\mathcal{G}}_i$ are disjoint connected subgraph of $\hat{\mathcal{G}}$. Denote the node index set of $\hat{\mathcal{G}}_i$ as $\hat{\mathcal{N}}_i$ and we define
\[\alpha_i = \min \{k \mid k \in \hat{\mathcal{N}}_i\}, \quad i = 1, 2, \dots, s.\]
Then, we can uniquely determine the index sets $\alpha$, $\beta$ and $\gamma$ as
\[
\alpha = \{\alpha_1, \dots, \alpha_s\}, \quad \beta = [N] \backslash (\hat{\mathcal{N}}_1 \cup \cdots \cup \hat{\mathcal{N}}_s), \quad \mbox{and} \quad \gamma = (\hat{\mathcal{N}}_1 \cup \cdots \cup \hat{\mathcal{N}}_s) \backslash \alpha.
\]
The index sets $\alpha$, $\beta$ and $\gamma$ have clear meanings in the convex clustering model. For a given index set $I \subseteq [|\mathcal{E}|]$ and the generated graph $\hat{\mathcal{G}}$, $\alpha_i$ is the index of the selected representative point for the $i$-th cluster identified by the connected component $\hat{\mathcal{G}}_i$. On the other hand, $\beta$ is the collection of the indices of the isolated clusters which contain only a singleton.

Furthermore, we could have an explicit formula for $M_{\gamma\alpha} \in \mathbb{R}^{|\alpha|\times|\gamma|}$, which is given by
\[
(M_{\gamma\alpha})_{ij} = \left\{
\begin{array}{ll}
1, & \mbox{if $j \in \hat{\mathcal{N}}_i$}, \\
0, & \mbox{otherwise}.
\end{array}
\right.
\]
Then
\[
X_{:\gamma} = X_{:\alpha}M_{\gamma\alpha},
\]
which actually maps the data points indexed by $\gamma$ to the corresponding centroids with indices in the set $\alpha$.

\section{Numerical Experiments}
In this section, we demonstrate the efficiency of the proposed solver independent AS technique and EAS technique via the important convex clustering model \eqref{eq: convex-clustering-edge} (with $p = 2$). In this paper, we mainly focus on the numerical efficiency of our proposed techniques, readers can refer to \cite{sun2021convex,hocking2011clusterpath,lindsten2011clustering} and the references therein for the performance of clustering by the convex clustering model \eqref{eq: convex-clustering-edge}. We
test the AS technique with AMA \cite{chi2015splitting}, ADMM \cite{chi2015splitting} and {\sc Ssnal} \cite{yuan2018efficient}, which are the three of the most popular algorithms for solving \eqref{eq: convex-clustering-edge}. Due to the limited length of the paper, we omit the details of these three algorithms but refer the readers to consult the aforementioned references. In our experiments, by default, we will generate the clustering path with $\lambda =: [10:-0.2:1]$. The weights $w_{ij}$ will be defined by the following Gaussian kernel with $k$-nearest neighbors (we choose $k=10$ in our experiments):
\begin{equation}\label{eq: weights_define}
  w_{ij} = \left\{
  \begin{array}{ll}
  \exp(-\frac{1}{2}\|A_{:i} - A_{:j}\|^2), & \mbox{if $(i, j) \in \mathcal{E}$},\\
  0, & \mbox{if $(i, j) \not \in \mathcal{E}$},
  \end{array}
  \right.
\end{equation}
where $\mathcal{E} = \{(i, j) \mid \mbox{$A_{:i}$ is among $A_{:j}$'s $k$ nearest neighbors}\}$.

For a fair comparison with the fast AMA algorithm, in this paper, we terminate all the algorithms based on the relative duality gap:
\begin{equation}
\label{eq: stop-criteria}
\begin{array}{c}
\eta = \frac{F_{\lambda}(X) - D_{\lambda}(Z)}{1 + |F_{\lambda}(X)| + |D_{\lambda}(Z)|} < \epsilon.
\end{array}
\end{equation}
Here, $\epsilon > 0$ is a given tolerance, $F_{\lambda}(X)$ and $D_{\lambda}(Z)$ are the objective function value of the primal problem \eqref{eq: two-block-structured} and the dual problem \eqref{eq: dual-problem}, respectively. We set $\epsilon = 10^{-6}$ in \eqref{eq: stop-criteria} and $\hat{\epsilon} = 2{\rm e}$-$16$ in \eqref{eq: initial_I_i} by default in this paper. All our computational results are obtained by running
{\sc Matlab} on a windows workstation (Intel Xeon E5-2680 @ 2.50GHz).
\subsection{Simulated Data Sets}
In this subsection, we provide some numerical results on the simulated two half-moon data, which is one of the most popular data sets for clustering.\\[3pt]
\begin{figure}[!ht]
     \centering
     \begin{subfigure}[b]{0.4\textwidth}
         \centering
         \includegraphics[scale = 0.4]{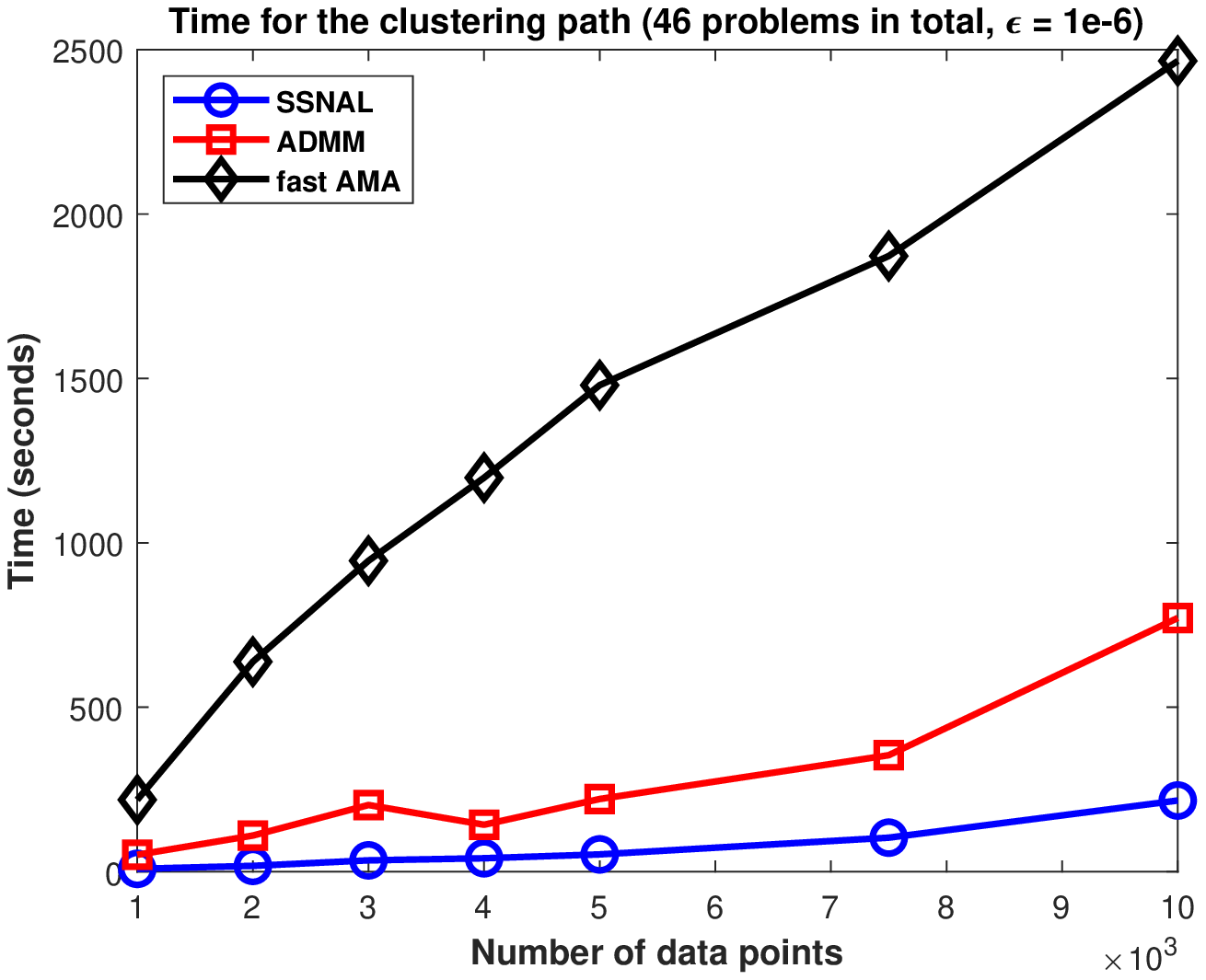}
         \caption{}
         \label{fig:direct_solve}
     \end{subfigure}
     \begin{subfigure}[b]{0.4\textwidth}
         \centering
         \includegraphics[scale = 0.4]{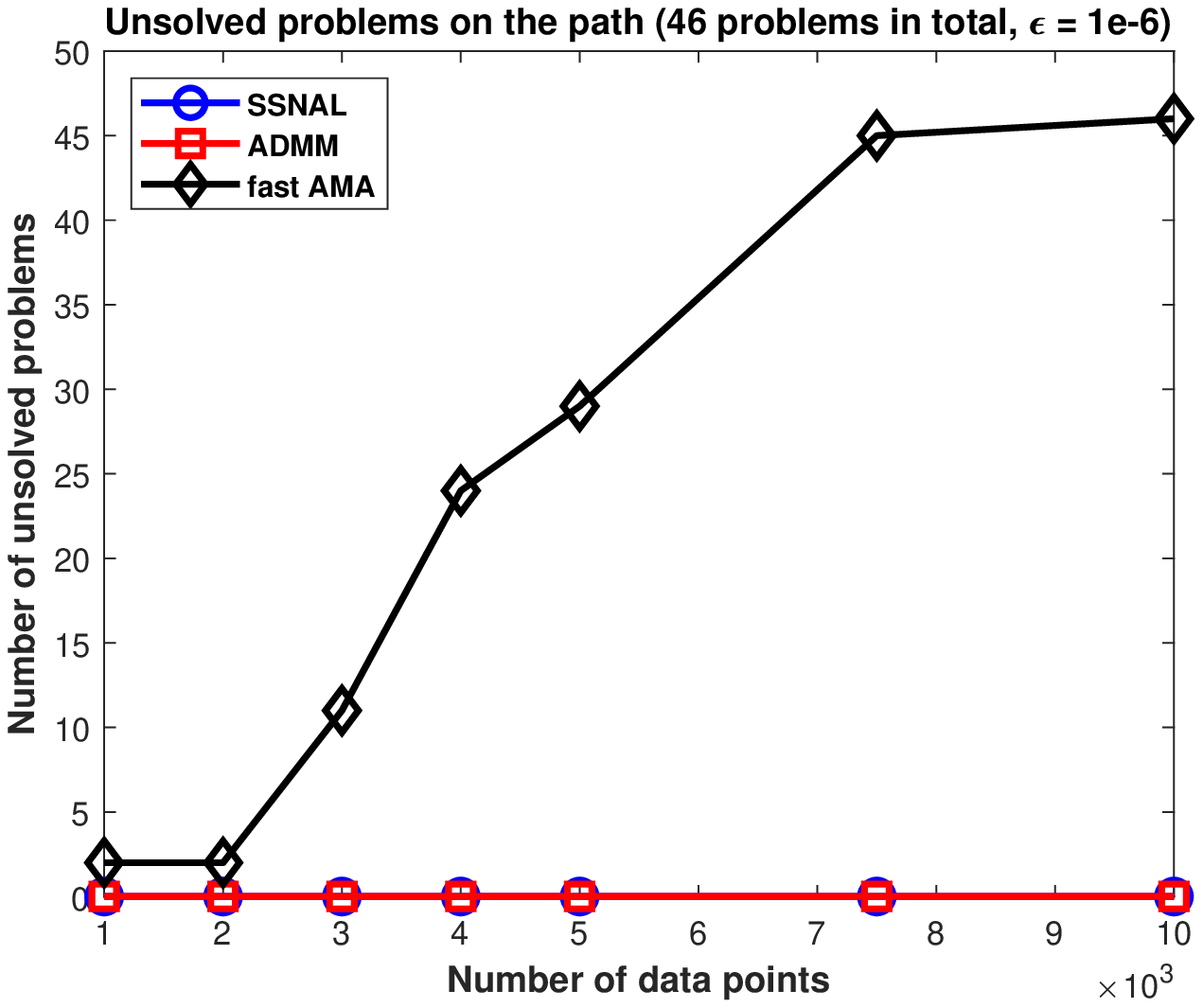}
         \caption{}
         \label{fig:unsolved_direct}
     \end{subfigure}
     \begin{subfigure}[b]{0.4\textwidth}
         \centering
         \includegraphics[scale = 0.4]{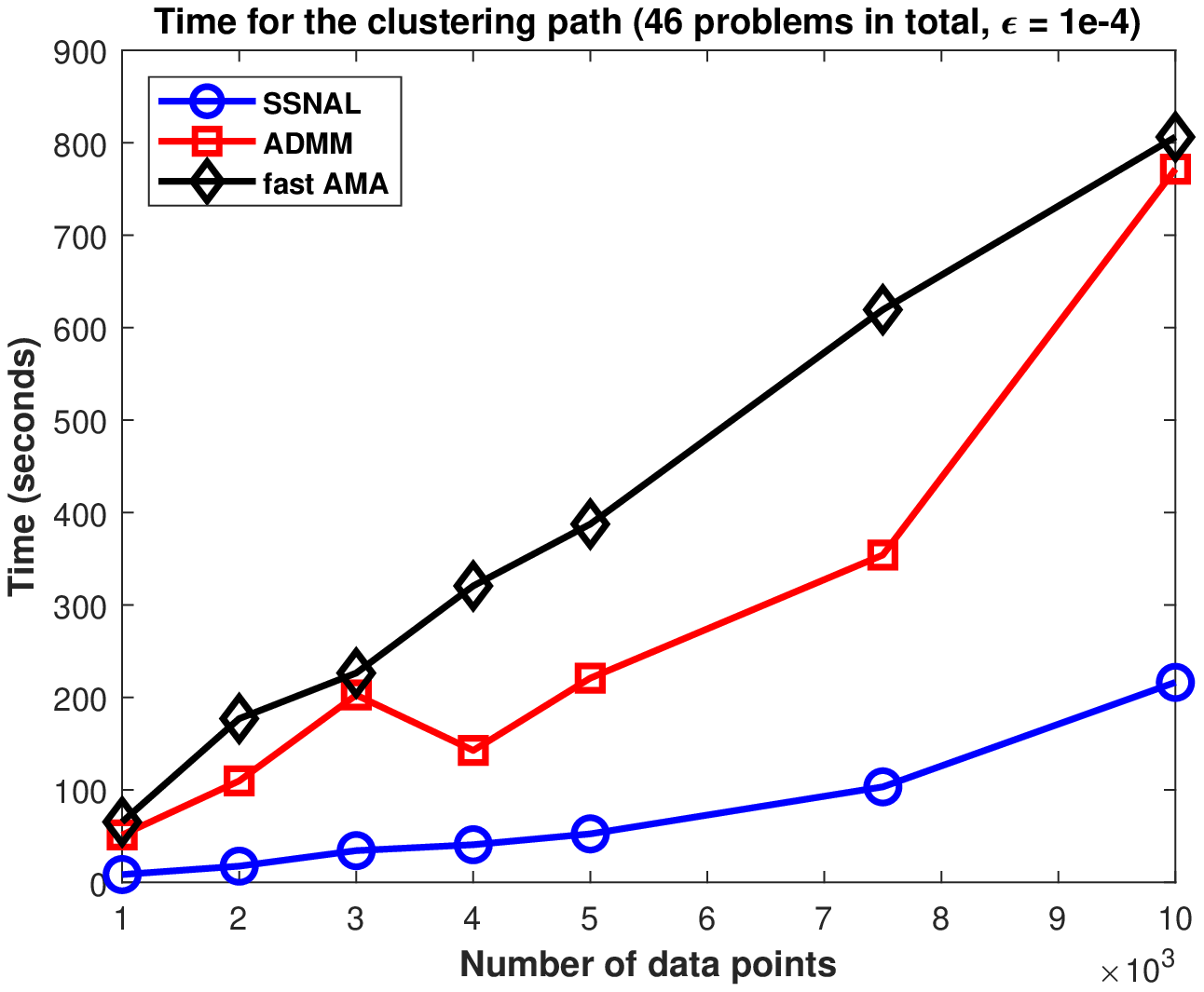}
         \caption{}
         \label{fig:admmvsamalowacc}
     \end{subfigure}
      \begin{subfigure}[b]{0.4\textwidth}
         \centering
         \includegraphics[scale = 0.4]{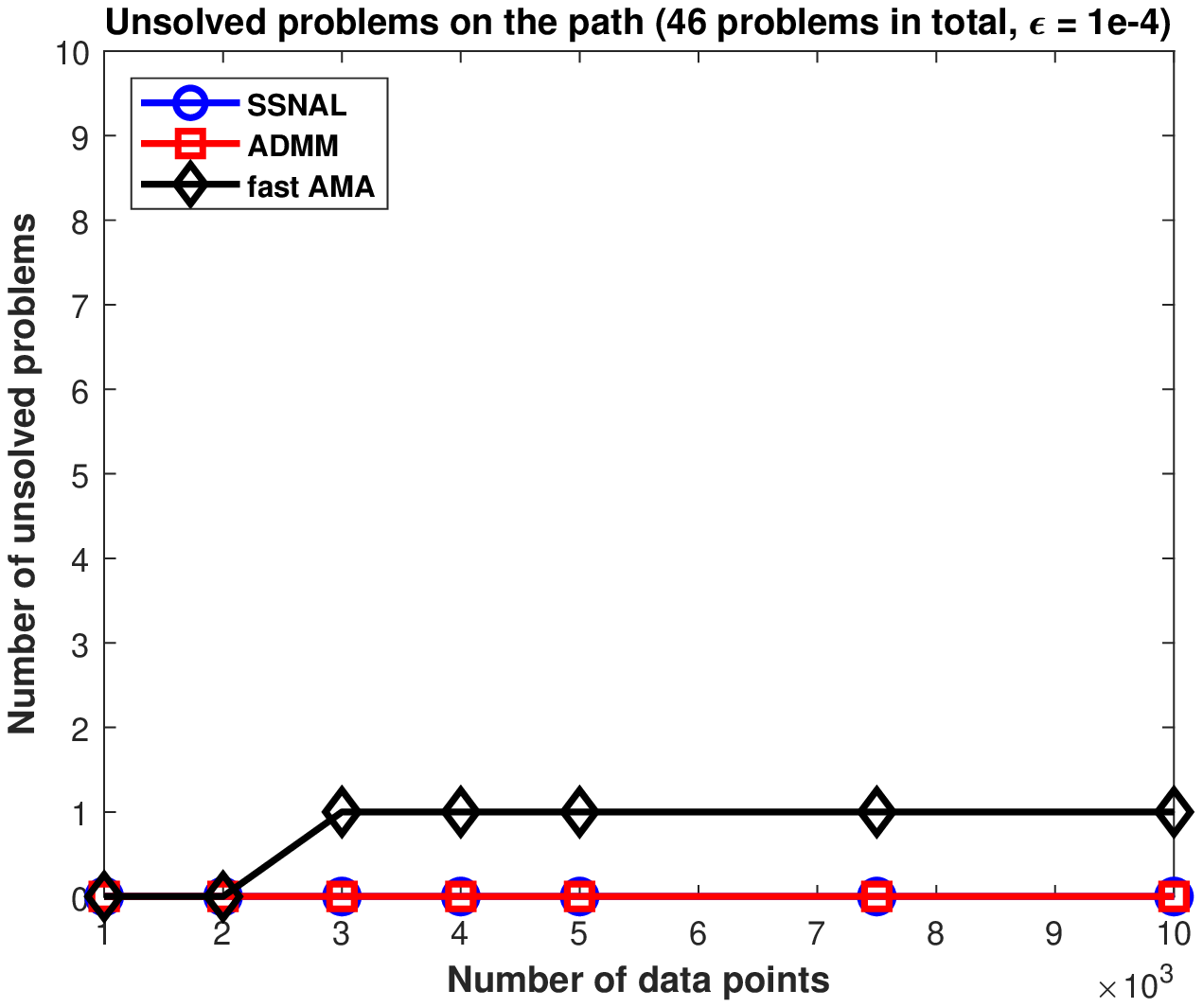}
         \caption{}
         \label{fig:unsolved_lowacc}
     \end{subfigure}
    \caption{Numerical performance revisit on the two half moon data set.}
        \label{fig:two-half-moon-revisit}
\end{figure}
First, we revisit the performance of fast AMA \cite{chi2015splitting}, ADMM \cite{chi2015splitting} and {\sc Ssnal} \cite{yuan2018efficient} for generating the clustering path directly. We implemented the three algorithms in {\sc Matlab} and tried our best to optimize the computations for a fair comparison\footnote{Readers can find the implementations at: https://blog.nus.edu.sg/mattohkc/softwares/convexclustering/}.  As shown in Figure \ref{fig:direct_solve}, {\sc Ssnal} is the best among the three algorithms on this data set. However, unlike the statements in \cite{chi2015splitting} stating that fast AMA is much better than ADMM, we actually observe some discrepancies in the performance. Fast AMA could not achieve the accuracy we set for most of the cases when $n$ is relatively large (Figure \ref{fig:unsolved_direct}). For a fairer comparison, we revisit the performance of the three algorithms under the relatively low accuracy setting with $\epsilon = 10^{-4}$ (Figure \ref{fig:admmvsamalowacc}, \ref{fig:unsolved_lowacc}), our numerical results show that ADMM is still better than fast AMA even in the low accuracy setting. Since the fast AMA has difficulty solving \eqref{eq: convex-clustering-edge} to  high accuracy, we focus on applying the AS technique with ADMM and {\sc Ssnal}.\\[3pt]
\begin{figure}[!ht]
     \centering
      \begin{subfigure}[b]{0.32\textwidth}
         \centering
         \includegraphics[scale = 0.32]{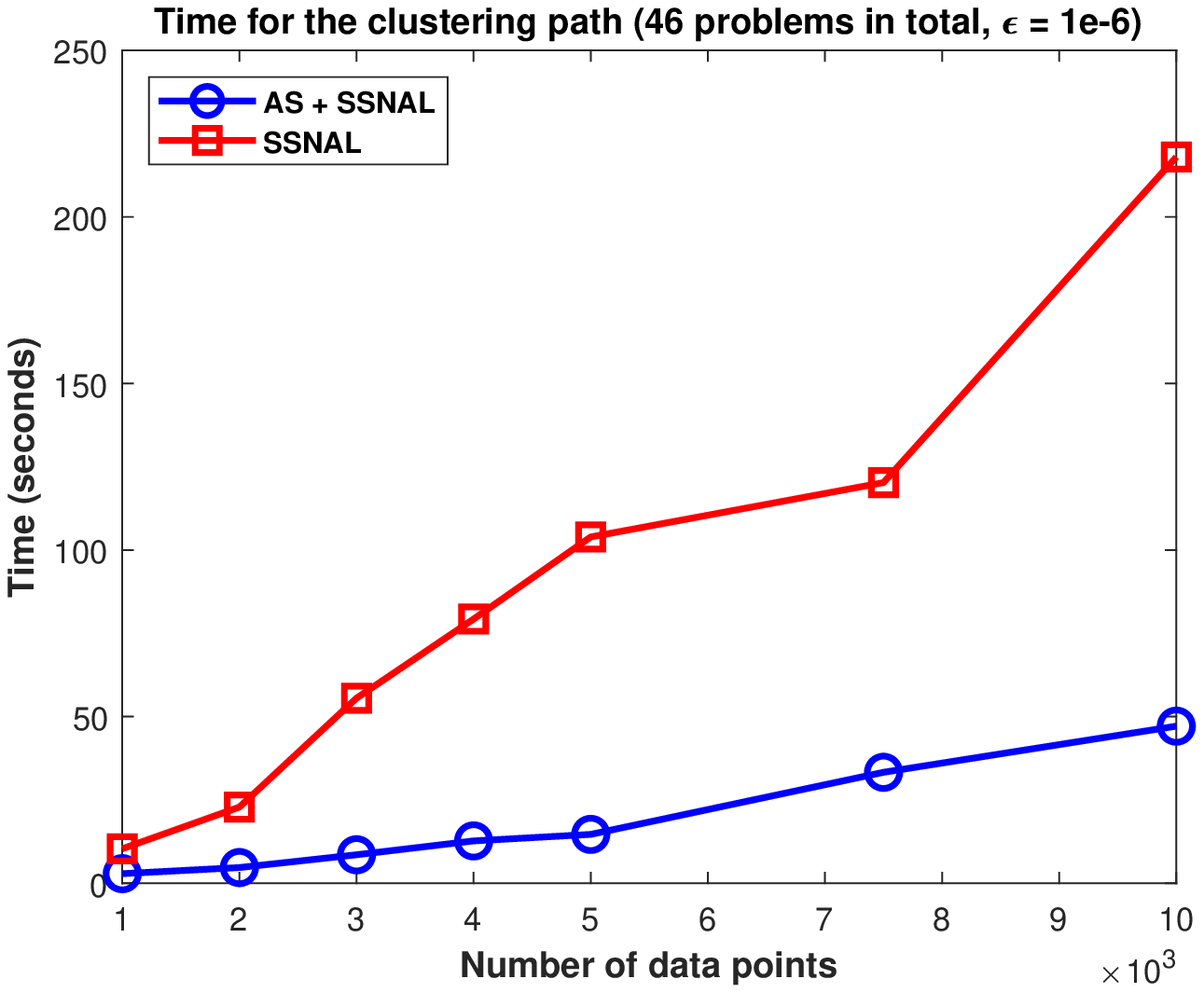}
         \caption{}
         \label{fig:assnal}
     \end{subfigure}
     \begin{subfigure}[b]{0.32\textwidth}
         \centering
         \includegraphics[scale = 0.32]{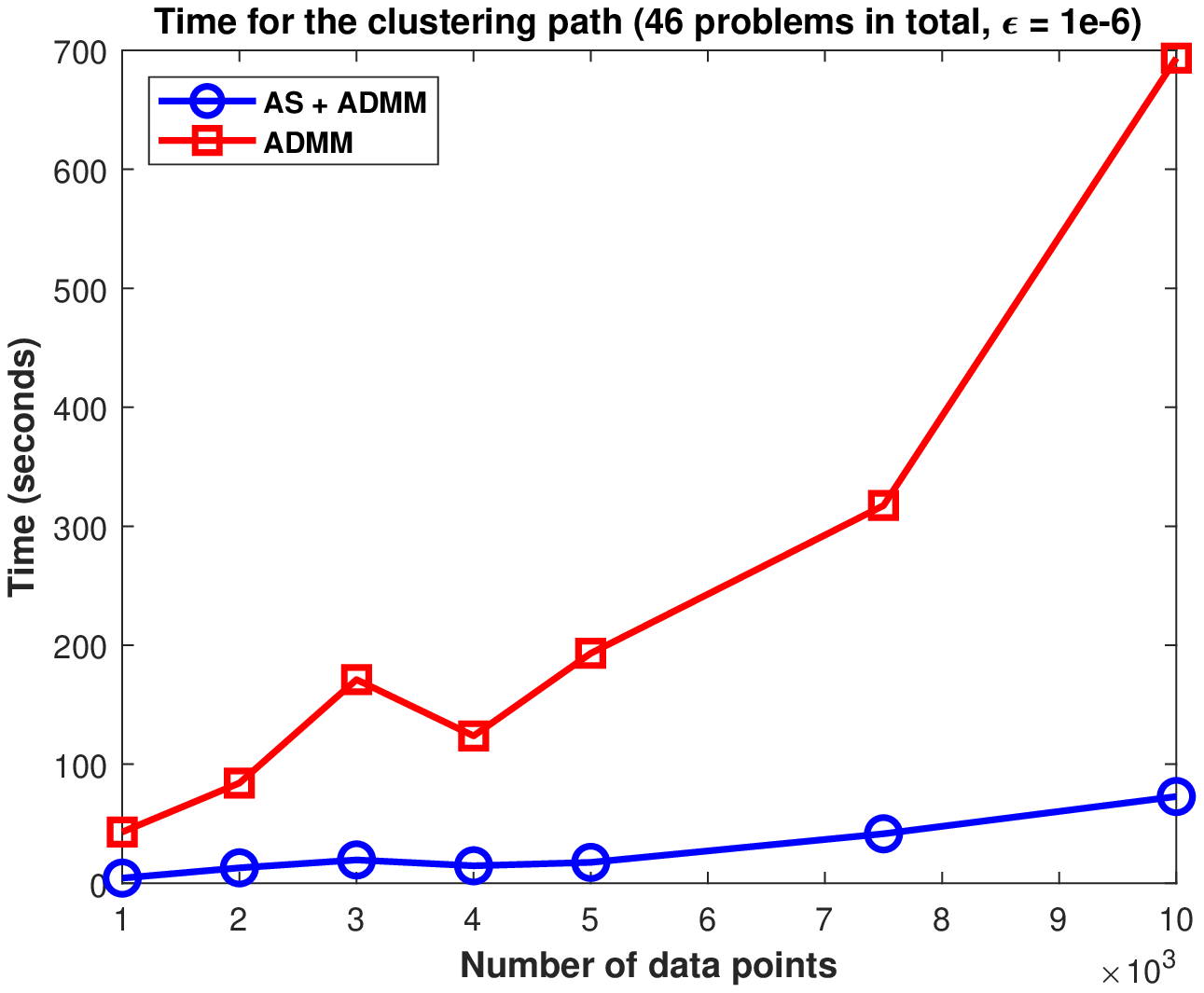}
         \caption{}
         \label{fig:asadmm}
     \end{subfigure}
      \begin{subfigure}[b]{0.32\textwidth}
         \centering
         \includegraphics[scale = 0.32]{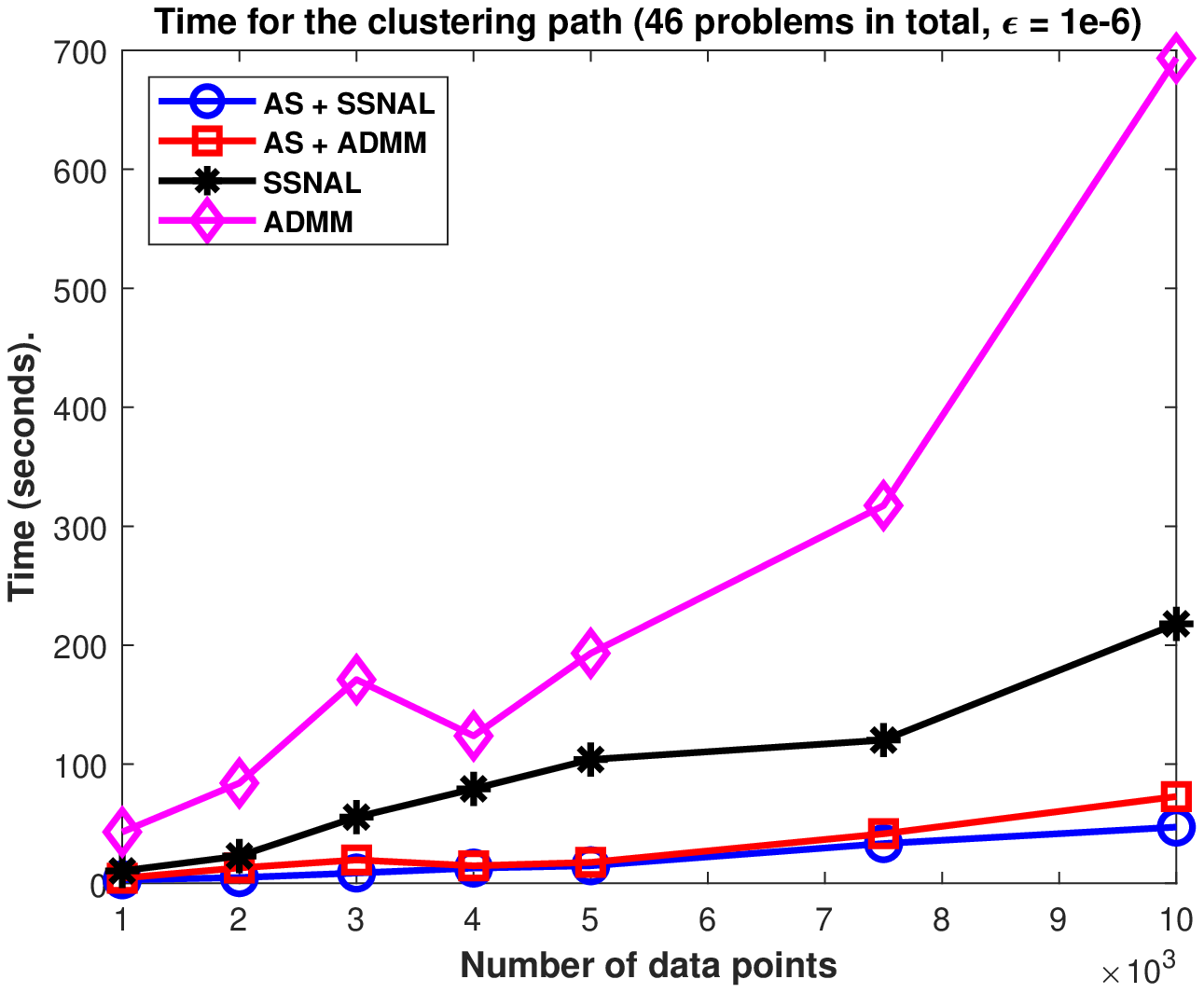}
         \caption{}
         \label{fig:asall}
     \end{subfigure}
        \caption{Numerical performance on the two half moon data set with $k=10$.}
        \label{fig:two-half-moon}
\end{figure}

Now, we move on to present the numerical performance of the proposed AS technique. The details could be found in Figure \ref{fig:two-half-moon}. Our numerical results on the two
half-moon data set show that the AS technique could accelerate the {\sc Ssnal} and the ADMM by up to $\mathbf{4.8}$ times (Figure \ref{fig:assnal}) and $\mathbf{12.8}$ times (Figure \ref{fig:asadmm}), respectively. With the help of the AS technique, AS+ADMM could even be  comparable to AS+{\sc Ssnal} (Figure \ref{fig:asall}), which demonstrates the power of the AS technique for capturing the intrinsic structured sparsity of the convex clustering model. Since the AS technique can take   advantage of the sparse structure to  substantially reduce the dimension of the problem,  we can apply the sparse Cholesky decomposition to solve the linear system involved in ADMM in a highly efficient way. This also partially demonstrates that ADMM is efficient to solve small scale convex clustering problems.

\begin{figure}[!ht]
     \centering
     \hfill
      \begin{subfigure}[b]{0.32\textwidth}
         \centering
         \includegraphics[scale = 0.32]{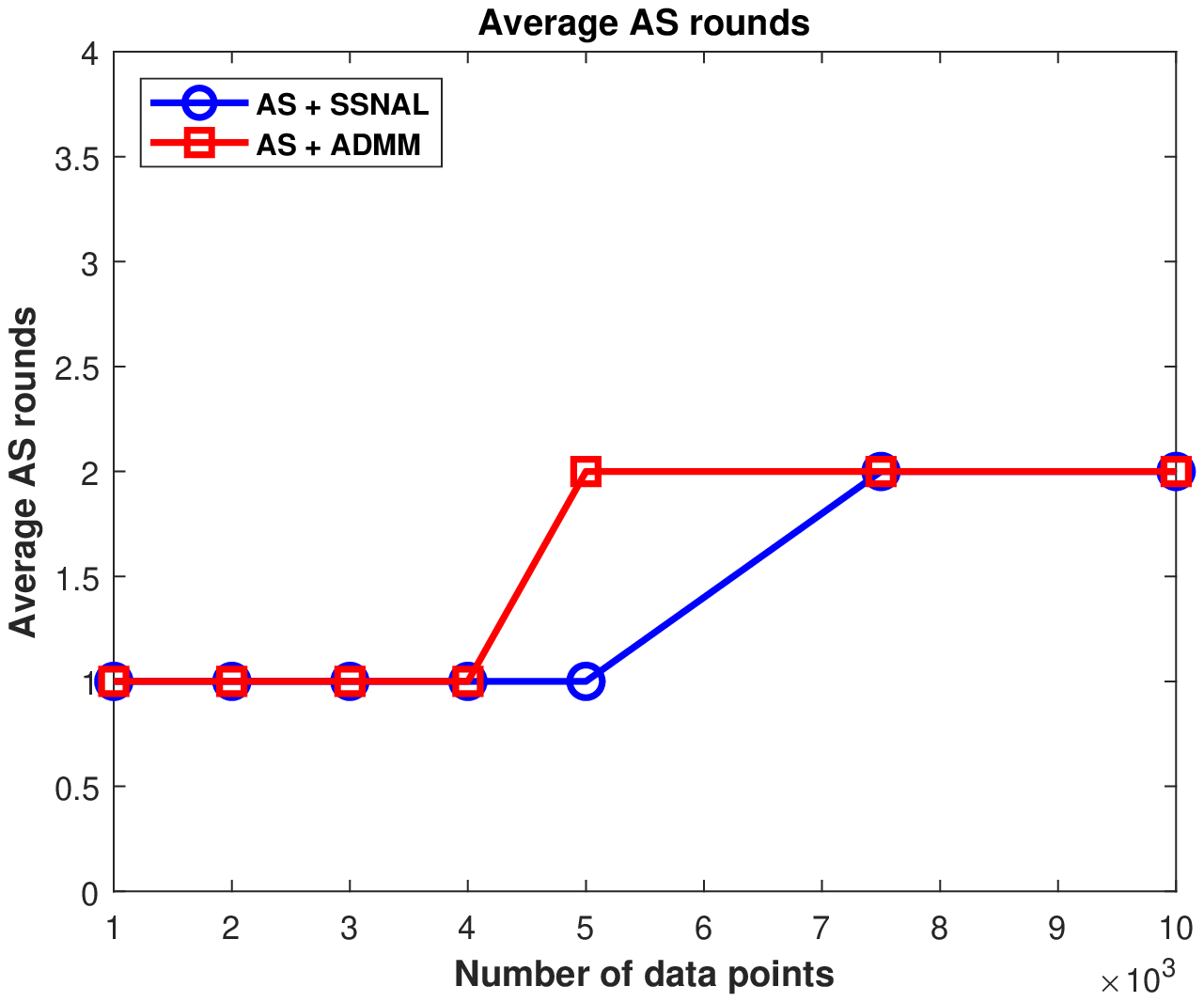}
         \caption{}
         \label{fig: AS_round}
     \end{subfigure}
     \hfill
     \begin{subfigure}[b]{0.32\textwidth}
         \centering
         \includegraphics[scale = 0.32]{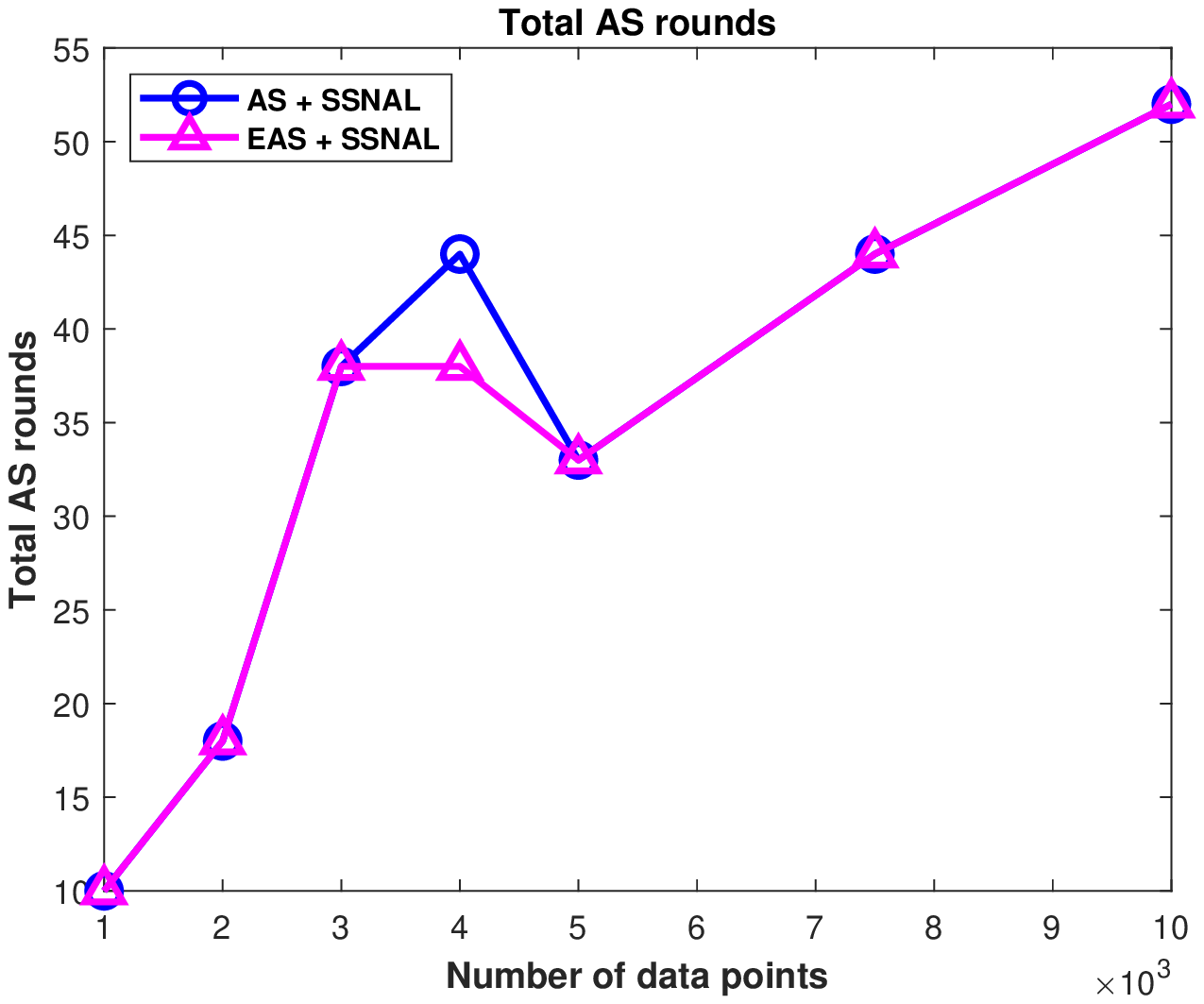}
         \caption{}
         \label{fig: hybrid_rounds_ssnal}
     \end{subfigure}
     \hfill
      \begin{subfigure}[b]{0.32\textwidth}
         \centering
         \includegraphics[scale = 0.32]{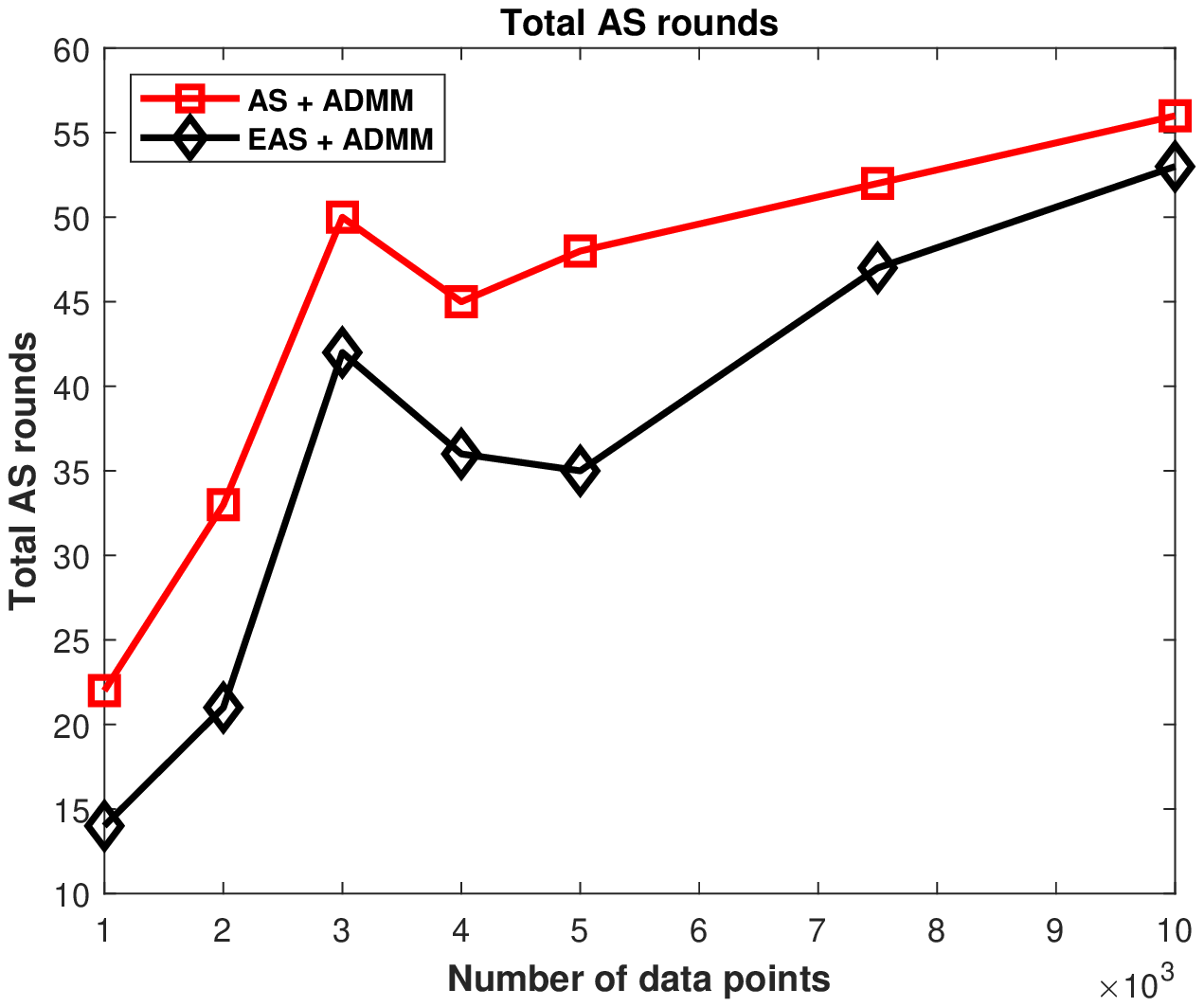}
         \caption{}
         \label{fig: hybrid_rounds_admm}
     \end{subfigure}
     \medskip
      \hfill
      \begin{subfigure}[b]{0.32\textwidth}
         \centering
         \includegraphics[scale = 0.32]{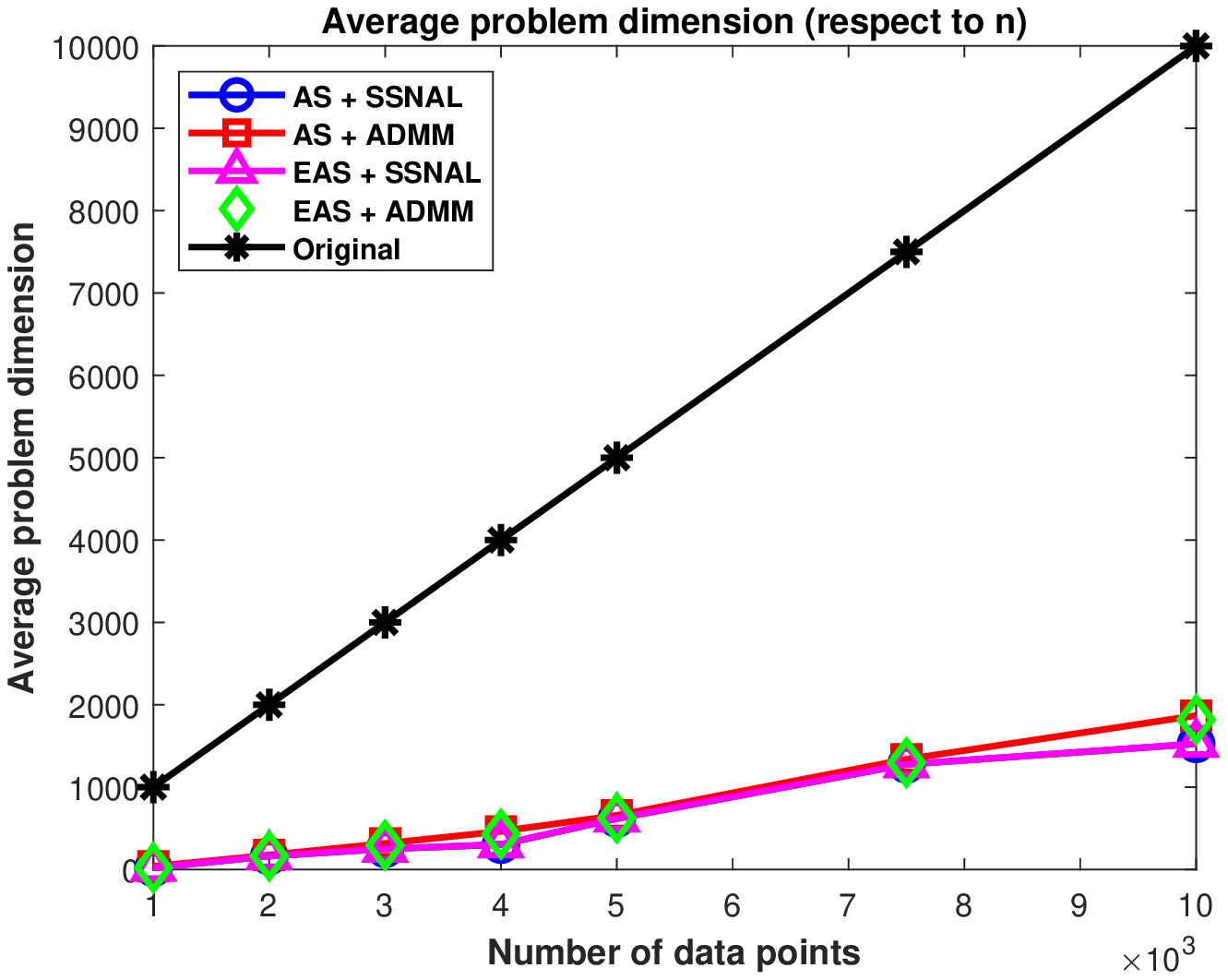}
         \caption{}
         \label{fig: AS_dimension}
     \end{subfigure}
     \hfill
      \begin{subfigure}[b]{0.32\textwidth}
         \centering
         \includegraphics[scale = 0.32]{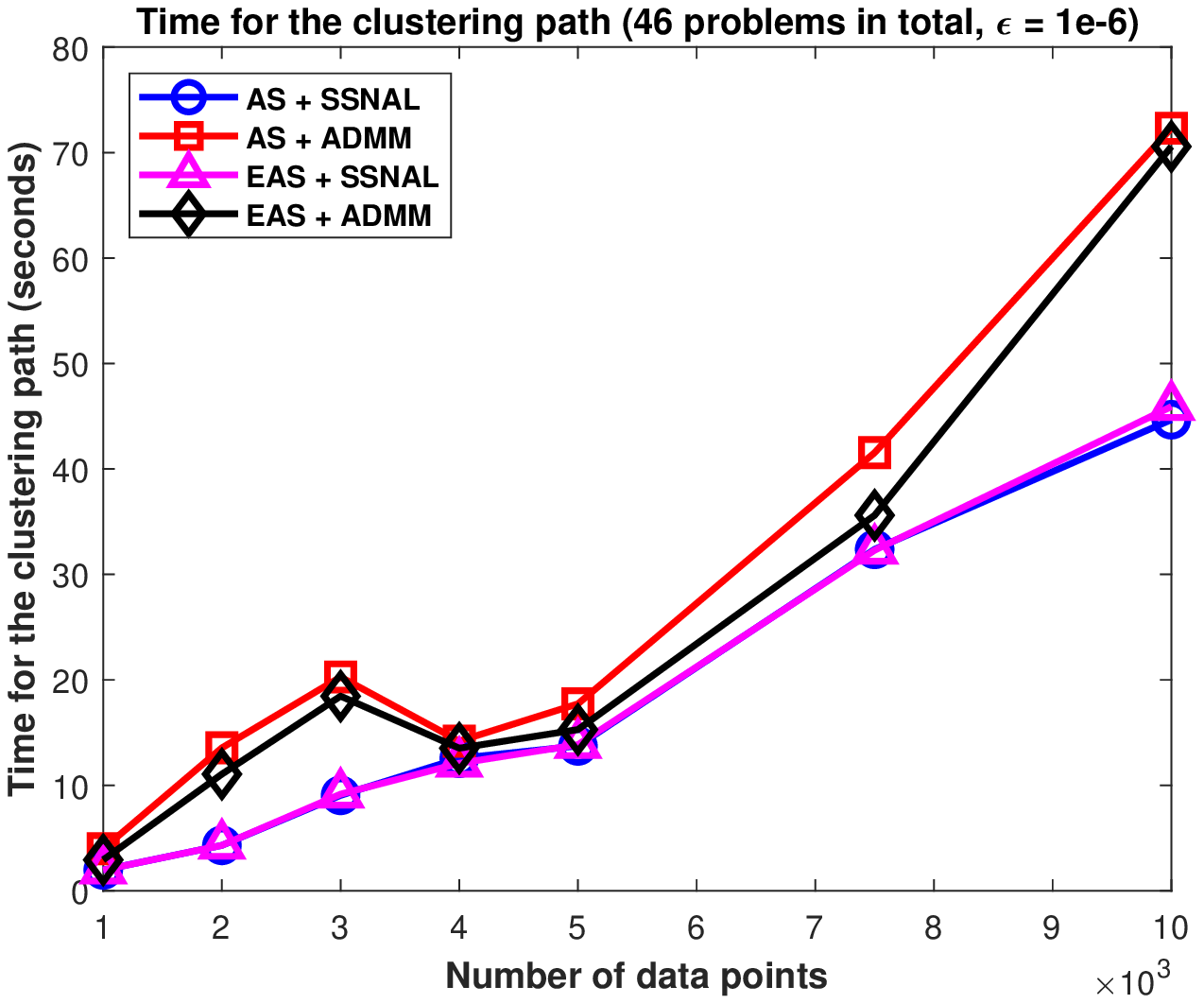}
         \caption{}
         \label{fig: AS_vs_hybrid}
     \end{subfigure}
     \hfill \mbox{}
        \caption{Numerical performance on the two half moon data set with $k=10$.}
        \label{fig:two-half-moon-part2}
\end{figure}

Next, we move on to present the empirical comparison between the AS technique and the EAS technique on the two half-moon data set. The results could be found in Figure \ref{fig:two-half-moon-part2}. As shown in Figure \ref{fig: AS_round}, the AS technique performs very well and the average AS rounds are very small, even for large scale problems. Furthermore, as shown in Figure \ref{fig: AS_dimension}, the sizes of the reduced problems are much smaller than those  of the original problems. These are the main reasons why the AS technique can accelerate the algorithms. On the other hand, as one may imagine, the EAS could potentially early-stop the AS procedure. Thus the EAS technique could reduce the AS rounds and further accelerate the algorithms. This phenomenon is indeed observed in numerical experiments. As shown in Figure \ref{fig: hybrid_rounds_ssnal} and Figure \ref{fig: hybrid_rounds_admm}, the EAS can have fewer AS rounds. The running time comparison could be found in Figure \ref{fig: AS_vs_hybrid}, which is consistent with our expectation.

\begin{remark}
  We close this subsection by making some remarks.
  \begin{itemize}
      \item[1.] One may be curious about the phenomenon where AS+{\sc Ssnal} could have fewer AS rounds than AS+ADMM (Figure \ref{fig: AS_round}). Now we try to give a plausible explanation. Although we set the same tolerance for terminating {\sc Ssnal} and ADMM, the real accuracy achieved by the two algorithms are different. In our experiments, we observe that the {\sc Ssnal} achieves higher accuracy than ADMM due to its faster convergence rate. This may be the main reason for the phenomenon shown in Figure \ref{fig: AS_round}.
      \item[2.] As shown in Figure \ref{fig: AS_vs_hybrid}, EAS could further accelerate ADMM but may not be so for
      {\sc Ssnal}, although it may early-terminate the AS procedure. This mainly because we need to solve additional auxiliary optimization problems in Algorithm \ref{alg:enhancedAS} by the APG algorithm and it may spend more time than solving a few more reduced problems with {\sc Ssnal}, because {\sc Ssnal} is very efficient on this data set.
      \item[3.] One may naturally agree that the AS technique and the EAS technique could be very powerful when $\lambda$ is relatively large, since many data points are assigned to only a few clusters in this case. However, since we generate the whole clustering path, some problems on the clustering path may not have this nice property when the parameter $\lambda$ is small (which affects the efficiency of the AS technique and the EAS technique). But we still observe the distinctive advantages of them.
  \end{itemize}
\end{remark}

\subsection{Real Data Sets}
In this subsection, we will present the performance of the AS technique and the EAS technique for generating the clustering path on the MNIST dataset \cite{lecun1998mnist}. We adopt the preprocessing method described in \cite{mixon2017clustering}, which applies a one hidden layer linear neural network to preprocess the raw images. Then, we apply the convex clustering model \eqref{eq: convex-clustering-edge} on the preprocessed data. Our experiments is on the testing set of MNIST data and the dimension of the preprocessed data is $10 \times 10000$. The details could be found in Table \ref{tab: mnist}.
\begin{table}[!ht]
\centering
\begin{tabular}{ |c|c|c| }
\hline
 &  {\sc Ssnal} & ADMM \\
\hline
& direct $|$ with AS $|$ with EAS & direct $|$ with AS $|$ with EAS\\
\hline
Time (seconds) & $1207.7$ $|$ $156.3$ $|$ $157.3$ & $1823.8$ $|$ $128.5$ $|$ $132.2$\\
\hline
Total AS round & $0$ $|$ $45$ $|$ $45$ & $0$ $|$ $47$ $|$ $47$\\
\hline
Average problem dimension & $10000$ $|$ $1377$ $|$ $1377$ & $10000$ $|$ $1389$ $|$ $1389$\\
\hline
\end{tabular}
\caption{Numerical performance on the MNIST data set.}
\label{tab: mnist}
\end{table}
From the results, we observe that the AS technique could accelerate the ADMM by up to $\mathbf{14.2}$ times and the {\sc Ssnal} by up to $\mathbf{7.7}$ times. It is understandable that AS could be more attractive for ADMM, since the second-order sparsity embedded in the algorithm {\sc Ssnal} has partially captured the structured sparsity already. Moreover, since the EAS technique does not reduce the sieving iterations on this data set comparing to the AS technique, the EAS technique will spend more time than the AS technique.

\section{Conclusion}
In this paper, we propose an AS technique and an enhanced AS technique, which are solver independent, for convex optimization problems with structured sparsity.
The proposed techniques can accelerate various optimization algorithms by substantially reducing the dimension of the problems that need to be solved. Numerical performance on the convex clustering model has demonstrated the high efficiency of the proposed dimension reduction techniques. We also established a finite convergence property of the AS and enhanced AS techniques in this paper. However, we should note that in the worst-case, the AS technique may sieve all the indices. But based on our empirical evaluation, one can say that the AS technique works very well in practice. Thus, as a future research topic, we will make efforts to analyze the average-case complexity of the AS and the enhanced AS technique.

\begin{thebibliography}{10}

\bibitem{beck2009fast}
{\sc A.~Beck and M.~Teboulle}, {\em A fast iterative shrinkage-thresholding
  algorithm for linear inverse problems}, SIAM journal on imaging sciences, 2
  (2009), pp.~183--202.

\bibitem{chi2015splitting}
{\sc E.~C. Chi and K.~Lange}, {\em Splitting methods for convex clustering},
  Journal of Computational and Graphical Statistics, 24 (2015), pp.~994--1013.

\bibitem{chi2019recovering}
{\sc E.~C. Chi and S.~Steinerberger}, {\em Recovering trees with convex
  clustering}, SIAM Journal on Mathematics of Data Science, 1 (2019),
  pp.~383--407.

\bibitem{hocking2011clusterpath}
{\sc T.~D. Hocking, A.~Joulin, F.~Bach, and J.-P. Vert}, {\em Clusterpath an
  algorithm for clustering using convex fusion penalties}, in 28th
  international conference on machine learning, 2011, p.~1.

\bibitem{jiang2020sum}
{\sc T.~Jiang}, {\em Sum-of-norms clustering: theoretical guarantee and
  post-processing}, Master's thesis, University of Waterloo, 2020.

\bibitem{jiang2020recovery}
{\sc T.~Jiang, S.~Vavasis, and C.~W. Zhai}, {\em Recovery of a mixture of
  {G}aussians by sum-of-norms clustering}, Journal of Machine Learning
  Research, 21 (2020), pp.~1--16.

\bibitem{lecun1998mnist}
{\sc Y.~LeCun}, {\em The {MNIST} database of handwritten digits}, http://yann.
  lecun. com/exdb/mnist/,  (1998).

\bibitem{lin2020adaptive}
{\sc M.~Lin, Y.~Yuan, D.~Sun, and K.-C. Toh}, {\em Adaptive sieving with
  {PPDNA}: Generating solution paths of exclusive lasso models}, arXiv preprint
  arXiv:2009.08719,  (2020).

\bibitem{lin2020centroid}
{\sc Y.~Lin and S.~Chen}, {\em A centroid auto-fused hierarchical fuzzy c-means
  clustering}, IEEE Transactions on Fuzzy Systems,  (2020).

\bibitem{lindsten2011clustering}
{\sc F.~Lindsten, H.~Ohlsson, and L.~Ljung}, {\em Clustering using sum-of-norms
  regularization: With application to particle filter output computation}, in
  2011 IEEE Statistical Signal Processing Workshop (SSP), IEEE, 2011,
  pp.~201--204.

\bibitem{lloyd1982least}
{\sc S.~Lloyd}, {\em Least squares quantization in pcm}, IEEE transactions on
  information theory, 28 (1982), pp.~129--137.

\bibitem{mixon2017clustering}
{\sc D.~G. Mixon, S.~Villar, and R.~Ward}, {\em Clustering subgaussian mixtures
  by semidefinite programming}, Information and Inference: A Journal of the
  IMA, 6 (2017), pp.~389--415.

\bibitem{moreau1965proximite}
{\sc J.-J. Moreau}, {\em Proximit{\'e} et dualit{\'e} dans un espace
  {H}ilbertien}, Bulletin de la Soci{\'e}t{\'e} math{\'e}matique de France, 93
  (1965), pp.~273--299.

\bibitem{nesterov1983method}
{\sc Y.~E. Nesterov}, {\em A method for solving the convex programming problem
  with convergence rate o (1/k\^{} 2)}, in Dokl. akad. nauk Sssr, vol.~269,
  1983, pp.~543--547.

\bibitem{ng2002spectral}
{\sc A.~Y. Ng, M.~Jordan, Y.~Weiss, et~al.}, {\em On spectral clustering:
  analysis and an al-gorithm}, Proceedings of IEEE Neural Information
  Processing Systems (NIPS),  (2002).

\bibitem{panahi2017clustering}
{\sc A.~Panahi, D.~Dubhashi, F.~D. Johansson, and C.~Bhattacharyya}, {\em
  Clustering by sum of norms: Stochastic incremental algorithm, convergence and
  cluster recovery}, in International conference on machine learning, PMLR,
  2017, pp.~2769--2777.

\bibitem{pelckmans2005convex}
{\sc K.~Pelckmans, J.~De~Brabanter, J.~A. Suykens, and B.~De~Moor}, {\em Convex
  clustering shrinkage}, in PASCAL Workshop on Statistics and Optimization of
  Clustering Workshop, 2005.

\bibitem{shah2017robust}
{\sc S.~A. Shah and V.~Koltun}, {\em Robust continuous clustering}, Proceedings
  of the National Academy of Sciences, 114 (2017), pp.~9814--9819.

\bibitem{clusteredlasso}
{\sc Y.~She}, {\em {Sparse regression with exact clustering}}, Electronic
  Journal of Statistics, 4 (2010), pp.~1055 -- 1096.

\bibitem{shi2000normalized}
{\sc J.~Shi and J.~Malik}, {\em Normalized cuts and image segmentation}, IEEE
  Transactions on pattern analysis and machine intelligence, 22 (2000),
  pp.~888--905.

\bibitem{soltanolkotabi2014robust}
{\sc M.~Soltanolkotabi, E.~Elhamifar, E.~J. Candes, et~al.}, {\em Robust
  subspace clustering}, Annals of Statistics, 42 (2014), pp.~669--699.

\bibitem{sun2021convex}
{\sc D.~Sun, K.-C. Toh, and Y.~Yuan}, {\em Convex clustering: model,
  theoretical guarantee and efficient algorithm}, Journal of Machine Learning
  Research, 22 (2021), pp.~1--32.

\bibitem{tan2015statistical}
{\sc K.~M. Tan and D.~Witten}, {\em Statistical properties of convex
  clustering}, Electronic journal of statistics, 9 (2015), pp.~2324--2347.

\bibitem{tibshirani1996regression}
{\sc R.~Tibshirani}, {\em Regression shrinkage and selection via the lasso},
  Journal of the Royal Statistical Society: Series B (Methodological), 58
  (1996), pp.~267--288.

\bibitem{tibshirani2005sparsity}
{\sc R.~Tibshirani, M.~Saunders, S.~Rosset, J.~Zhu, and K.~Knight}, {\em
  Sparsity and smoothness via the fused lasso}, Journal of the Royal
  Statistical Society: Series B (Statistical Methodology), 67 (2005),
  pp.~91--108.

\bibitem{vassilvitskii2006k}
{\sc S.~Vassilvitskii and D.~Arthur}, {\em k-means++: The advantages of careful
  seeding}, in Proceedings of the eighteenth annual ACM-SIAM symposium on
  Discrete algorithms, 2006, pp.~1027--1035.

\bibitem{vidal2011subspace}
{\sc R.~Vidal}, {\em Subspace clustering}, IEEE Signal Processing Magazine, 28
  (2011), pp.~52--68.

\bibitem{yuan2006model}
{\sc M.~Yuan and Y.~Lin}, {\em Model selection and estimation in regression
  with grouped variables}, Journal of the Royal Statistical Society: Series B
  (Statistical Methodology), 68 (2006), pp.~49--67.

\bibitem{yuan2018efficient}
{\sc Y.~Yuan, D.~Sun, and K.-C. Toh}, {\em An efficient semismooth {N}ewton
  based algorithm for convex clustering}, in International Conference on
  Machine Learning, PMLR, 2018, pp.~5718--5726.

\bibitem{zarantonello1971projections}
{\sc E.~H. Zarantonello}, {\em Projections on convex sets in {H}ilbert space
  and spectral theory: Part i. projections on convex sets: Part ii. spectral
  theory}, in Contributions to nonlinear functional analysis, Elsevier, 1971,
  pp.~237--424.

\bibitem{zhou2010exclusive}
{\sc Y.~Zhou, R.~Jin, and S.~C.-H. Hoi}, {\em Exclusive lasso for multi-task
  feature selection}, in Proceedings of the thirteenth international conference
  on artificial intelligence and statistics, 2010, pp.~988--995.

\bibitem{zhu2014convex}
{\sc C.~Zhu, H.~Xu, C.~Leng, and S.~Yan}, {\em Convex optimization procedure
  for clustering: Theoretical revisit}, Advances in Neural Information
  Processing Systems, 27 (2014), pp.~1619--1627.

\end{thebibliography}

\end{document}